\numberwithin{algorithm}{section}
\newtheorem{remark}{Remark}
\newtheorem{example}{Example}
\newcommand{\bw}{\mathbf w}
\newcommand{\bx}{\mathbf x}
\begin{document}

\title{A modified Newton iteration for finding nonnegative $Z$-eigenpairs  of a nonnegative tensor}
\author{ Chun-Hua Guo\thanks{%
Department of Mathematics and Statistics, University of Regina, Regina, SK
S4S 0A2, Canada (\texttt{Chun-Hua.Guo@uregina.ca}). This author was
supported in part by an NSERC Discovery Grant.} \and Wen-Wei Lin\thanks{%
Department of Applied Mathematics, National Chiao Tung University, Hsinchu
300, Taiwan (\texttt{wwlin@math.nctu.edu.tw}). This author was supported in
part by the Ministry of Science and Technology, the National Center for
Theoretical Sciences, and ST Yau Center at Chiao-Da in Taiwan.} \and %
Ching-Sung Liu\thanks{%
Department of Applied Mathematics, National University of Kaohsiung,
Kaohsiung 811, Taiwan (\texttt{chingsungliu@nuk.edu.tw}). This author was
supported in part by the Ministry of Science and Technology.} }
\maketitle

\begin{abstract}
We propose a modified Newton iteration for finding  some nonnegative $Z$-eigenpairs of a nonnegative tensor.
When the tensor is irreducible, all nonnegative eigenpairs are known to be positive. We prove local quadratic
convergence of the new iteration to any positive eigenpair of a nonnegative tensor, under the usual assumption guaranteeing the local quadratic
convergence of the original Newton iteration. A big advantage of the modified Newton iteration is that it seems capable of finding a 
nonnegative eigenpair starting with any positive unit vector. 
Special attention is paid to transition probability tensors.
\end{abstract}

\begin{keywords}
nonnegative tensor, transition probability tensor, nonnegative $Z$-eigenpair,  modified Newton iteration, 
 quadratic convergence
\end{keywords}

\begin{AMS}
65F15, 65F50
\end{AMS}

\section{Introduction}
\label{sec1}

A real-valued $m$th-order $n$-dimensional tensor $\mathcal{A}$ consists of $%
n^{m}$ entries in $\mathbb{R}$, and has the form
\begin{equation*}
\mathcal{A}=(A_{i_{1}i_{2}\ldots i_{m}})\text{, }\quad A_{i_{1}i_{2}\ldots
i_{m}}\in \mathbb{R}\text{, }\quad 1\leq i_{1},i_{2},\ldots ,i_{m}\leq n.
\end{equation*}%
The set of all such tensors is denoted by $\mathbb{R}^{[m,n]}$. We use $\mathbb{R}_{+}^{[m,n]}$ to denote 
all nonnegative tensors $\mathcal{A}\in \mathbb{R}^{[m,n]}$, for which $A_{i_{1}i_{2}\ldots
i_{m}}\geq 0$ for all $i_{1}, i_{2}, \ldots, 
i_{m}$. 
Various applications of tensors, nonnegative tensors in
particular, can be found in \cite{KB09}.

For a column vector $\mathbf{x}=[x_{1},x_{2},\ldots
,x_{n}]^{T}\in \mathbb{R}^{n},$ we define a column vector in $\mathbb{R}^n$ by 
\begin{equation}
\mathcal{A}\mathbf{x}^{m-1}:=\left( \sum_{i_{2},\ldots
,i_{m}=1}^{n}A_{ii_{2}\ldots i_{m}}x_{i_{2}}\ldots x_{i_{m}}\right) _{1\leq
i\leq n}.  \label{eq: Ax}
\end{equation}

We are interested in eigenvalue problems for nonnegative tensors. 

\begin{definition}[\protect\cite{Q05,CZ13}]
Let $\mathcal{A}\in \mathbb{R}^{[m,n]}$. 
 We say that $(\mathbf{x}, \lambda )\in \left( \mathbb{R}^{n}\backslash \{0\}\right)  \times \mathbb{R}$ is an $H$-eigenpair
(eigenvector-eigenvalue) of $\mathcal{A}$ if%
\begin{equation}
\mathcal{A}\mathbf{x}^{m-1}=\lambda \mathbf{x}^{[m-1]},  \label{eq: NEPH}
\end{equation}
 where $\mathbf{x}^{[m-1]}=[x_{1}^{m-1},x_{2}^{m-1},\ldots ,x_{n}^{m-1}]^{T},$
and is a $Z$-eigenpair of $\mathcal{A}$ if
\begin{equation}
\mathcal{A}\mathbf{x}^{m-1}=\lambda \mathbf{x}, \quad \|\mathbf{x}\|=1.  \label{eq: NEP}
\end{equation}
\end{definition}

If $\mathbf{x}$ is an $H$-eigenvector, then $c\mathbf{x}$ is also an $H$-eigenvector 
for any $c\ne 0$. The same is not true in general for $Z$-eigenvectors. That is why we require 
$\|\mathbf{x}\|=1$ in \eqref{eq: NEP} with $\|\cdot \|$ being any vector norm. If the $2$-norm is used, then a $Z$-eigenpair is called a $Z_2$-eigenpair; 
If the $1$-norm is used, then a $Z$-eigenpair is called a $Z_1$-eigenpair. 
As noted in \cite{CZ13}, for $\mathbf{x}$ with $\|\mathbf{x}\|_1=1$, 
$(\mathbf{x}, \lambda)$ is a $Z_1$-eigenpair if and only if 
$\left ( \frac{\mathbf{x}}{\|\mathbf{x}\|_2},  \frac{\lambda}{\|\mathbf{x}\|_2^{m-2}} \right )$
 is a $Z_2$-eigenpair. In this paper, we are interested in $Z_1$-eigenpairs since special attention will be paid to transition probability tensors. A $Z_1$-eigenpair will be referred to as a $Z$-eigenpair or simply an eigenpair.

A weakly irreducible nonnegative tensor has a unique positive $H$-eigenvector $\mathbf{x}_{*}$ (up to a positive scalar multiple) 
and the corresponding eigenvalue $\lambda_{*}$ is positive \cite{FGH}. The positive $H$-eigenpair $(\mathbf{x}_{*}, \lambda_{*})$ may be found by the NQZ algorithm 
\cite{NQZ09}, whose (linear) convergence is guaranteed for the smaller class of weakly primitive tensors \cite{HHQ11}. In \cite{LGL15,LGL17}, we  present a modified Newton iteration, called the Newton--Noda iteration, for finding the unique positive $H$-eigenpair. The method requires the selection of a positive parameter $\theta_k$ in the $k$th iteration, and naturally keeps the positivity in the approximate eigenpairs. 
For $m=3$, a practical procedure for choosing $\theta_k$ is given in \cite{LGL15}, which guarantees the global convergence of the method. 
For a general $m$, a different practical procedure for choosing $\theta_k$ is given in \cite{LGL17}, and the global convergence of the method is almost certain. Both procedures will give $\theta_k=1$ near convergence and local quadratic convergence is achieved. 
The benefit of using $\theta_k=1$ right from the beginning is also mentioned in \cite{LGL17}, but the global convergence of the method becomes less certain in this case, although no examples showing divergence have been found. 

The $Z$-eigenvalue problem is much more difficult. When the tensor is irreducible, all nonnegative $Z$-eigenpairs are positive
but there may be many such pairs \cite{CPZ13}, so global convergence of any iterative method to a fixed positive eigenpair becomes impossible in general.  A main algorithm for the $Z$-eigenvalue problem has been  the shifted symmetric higher-order power method (SS-HOPM) in \cite{KM11}. 

After some preliminaries in Section \ref{sec2}, we propose in Section \ref{sec3} a modified Newton iteration for finding a nonnegative $Z$-eigenpair of a nonnegative tensor, 
in the spirit of \cite{LGL17} for the $H$-eigenvalue problem. 
If we compare our algorithm here to that in 
\cite{LGL17} (although they are for two different problems), we no longer try to select parameters $\theta_k$ to guarantee the 
monotonic convergence of the sequence approximating a nonnegative $Z$-eigenvalue and we effectively use $\theta_k=1$ all the time here. 
  When the tensor has more than one nonnegative eigenpairs, we expect to find some of them by using different initial vectors in our algorithm. 
Unlike the algorithm in \cite{LGL17} for the $H$-eigenvalue problem, our algorithm here does not naturally preserve 
nonnegativity in approximate $Z$-eigenpairs. Instead, the nonnegativity is preserved through 
some intervention when needed. 
In Section \ref{sec4} we prove local quadratic
convergence of the new iteration to any positive eigenpair of a nonnegative tensor, under the usual assumption guaranteeing the local quadratic
convergence of the original Newton iteration. 
In Section \ref{sec5} we pay special attention to transition probability tensors and explain why in this special case nonnegativity can often be preserved 
without the intervention. 
The usefulness of our new algorithm will be illustrated through some  numerical results in Section \ref{sec6}. 
Some concluding remarks are given in Section \ref{sec7}.

\section{Preliminaries}
\label{sec2}

Nonnegative and positive matrices or vectors are defined entrywise. For example, 
$A=\left[ A_{ij}\right]$ is nonnegative, written $A\ge 0$,  if $A_{ij}\geq 0$ for all $i$ and $j$. 
A $Z$-matrix is a real square matrix whose off-diagonal elements are all nonpositive. 
A $Z$-matrix $A$ is called a nonsingular $M$-matrix
if $A=sI-B$ with $B\geq 0$ and $s>\rho (B)$, where $\rho (\cdot )$ is the spectral radius. 
A $Z$-matrix $A$ is a nonsingular $M$-matrix if and only if  $A^{-1}\ge 0$ 
(see \cite{BPl94} for example).

In this paper all vectors are $n$-vectors and all matrices are $n\times n$, unless
specified otherwise.
We use $v_i$ or $(\mathbf{v})_i$  to represent the $i$th element of a vector $\mathbf{v}$.
For a pair of positive vectors $\mathbf{v}$ and $\mathbf{w}$, we define
\begin{equation*}
\max \left( \frac{\mathbf{w}}{\mathbf{v}}\right) =\underset{i}{\max }
\left (\frac{w_i}{v_i}\right ) ,\text{ \ }\min \left(
\frac{\mathbf{w}}{\mathbf{v}}\right) =\underset{i}{\min }\left (\frac{w_i}{v_i}\right ). 
\end{equation*}%

 We will sometimes assume a tensor in  $\mathbb{R}_{+}^{[m,n]}$ is  irreducible or weakly irreducible.

\begin{definition}[\protect\cite{CPZ08,NQZ09}]
A tensor $\mathcal{A}\in \mathbb{R}^{[m,n]}$   is called reducible if
there exists a nonempty proper index subset $S\subset \left\{ 1,2,\ldots
,n\right\} $ such that%
\begin{equation*}
A_{i_{1}i_{2}\ldots i_{m}}=0,\text{ }\forall \ i_{1}\in S,\text{ }\forall \
i_{2},\ldots ,i_{m}\notin S.
\end{equation*}%
If $\mathcal{A}$ is not reducible, then we call $\mathcal{A}$ irreducible.
\end{definition}

The notion of weakly irreducible nonnegative tensors is
introduced in \cite{FGH}. The following equivalent definition is given in \cite{YY11}. 

\begin{definition}
\label{weakly2}
A tensor $\mathcal{A}\in \mathbb{R}_+^{[m,n]}$   is called weakly irreducible if for every nonempty proper index subset $S\subset \left\{ 1,2,\ldots,n\right\}$ 
there exist $i_1\in S$ and $i_2, \ldots, i_m$ with at least one $i_q\notin S$, $q=2, \ldots, m$, such that 
$A_{i_{1}i_{2}\ldots i_{m}}\ne 0$. 
\end{definition}

Note that all irreducible tensors in $\mathbb{R}_+^{[m,n]}$ are weakly irreducible.

The following result is given in Theorems 2.5 and 2.6 of \cite{CPZ13}.

\begin{theorem}
If $\mathcal{A} \in \mathbb{R}_{+}^{[m,n]}$,  then $\mathcal{A}$ has a nonnegative 
 $Z$-eigenpair $(\mathbf{x}, \lambda)$. 
If  $\mathcal{A}$ is irreducible, then every nonnegative 
 $Z$-eigenpair is  positive.
\end{theorem}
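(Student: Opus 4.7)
The plan is to establish existence by applying Brouwer's fixed-point theorem to a shifted map on the probability simplex, and to prove the positivity claim by contradiction, using the definition of reducibility applied to the zero pattern of the eigenvector.

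For existence, I would work on the compact convex set
\[
\Delta = \{\mathbf{x} \in \mathbb{R}^n : \mathbf{x} \ge 0,\ \|\mathbf{x}\|_1 = 1\}.
\]
The natural map $\mathbf{x} \mapsto \mathcal{A}\mathbf{x}^{m-1}/\|\mathcal{A}\mathbf{x}^{m-1}\|_1$ is undefined where $\mathcal{A}\mathbf{x}^{m-1}$ vanishes, so instead I would use the shifted map
\[
T(\mathbf{x}) = \frac{\mathcal{A}\mathbf{x}^{m-1} + \mathbf{x}}{\|\mathcal{A}\mathbf{x}^{m-1} + \mathbf{x}\|_1}.
\]
On $\Delta$, the numerator is nonnegative and $\|\mathcal{A}\mathbf{x}^{m-1} + \mathbf{x}\|_1 \ge \|\mathbf{x}\|_1 = 1$, so $T$ is a well-defined continuous self-map of $\Delta$. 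Brouwer's theorem then produces a fixed point $\mathbf{x}\in\Delta$ with $\mathcal{A}\mathbf{x}^{m-1} + \mathbf{x} = s\,\mathbf{x}$, where $s = \|\mathcal{A}\mathbf{x}^{m-1}\|_1 + 1$. Setting $\lambda := s - 1 = \|\mathcal{A}\mathbf{x}^{m-1}\|_1 \ge 0$ yields $\mathcal{A}\mathbf{x}^{m-1} = \lambda \mathbf{x}$, so $(\mathbf{x},\lambda)$ is a nonnegative $Z$-eigenpair. The main subtlety here is precisely the need to modify the natural power-type map so that it stays continuous on all of $\Delta$; adding the $\mathbf{x}$ term accomplishes this while merely translating the would-be eigenvalue by $1$ and leaving the eigenvector equation intact.

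For the second assertion, assume $\mathcal{A}$ is irreducible and let $(\mathbf{x},\lambda)$ be a nonnegative $Z$-eigenpair with at least one vanishing entry. Set $S = \{i : x_i = 0\}$; since $\|\mathbf{x}\|_1 = 1$, the set $S$ is a nonempty proper subset of $\{1,\ldots,n\}$. For each $i\in S$, equation (\ref{eq: NEP}) gives
\[
0 = \lambda x_i = (\mathcal{A}\mathbf{x}^{m-1})_i = \sum_{i_2,\ldots,i_m=1}^{n} A_{ii_2\ldots i_m}\,x_{i_2}\cdots x_{i_m}.
\]
Every summand is nonnegative, so they all vanish; in particular, choosing $i_2,\ldots,i_m \notin S$ (so that $x_{i_2}\cdots x_{i_m} > 0$) forces $A_{ii_2\ldots i_m} = 0$ for all such indices. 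This is precisely the reducibility criterion with the subset $S$, contradicting irreducibility. Hence $\mathbf{x}$ must be strictly positive.
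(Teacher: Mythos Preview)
The paper does not actually prove this theorem; it merely quotes it as Theorems~2.5 and~2.6 of \cite{CPZ13}. Your argument is correct and is essentially the standard proof of these facts: a Brouwer fixed-point argument on the probability simplex for existence (with the shift $\mathbf{x}\mapsto \mathcal{A}\mathbf{x}^{m-1}+\mathbf{x}$ to guarantee a nonvanishing, hence continuous, normalization), and the zero-set contradiction against the definition of reducibility for the positivity claim.

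One small addendum: in this paper a ``positive eigenpair'' means both $\mathbf{x}>0$ and $\lambda>0$, whereas you only explicitly conclude $\mathbf{x}>0$. The remaining piece is immediate: once $\mathbf{x}>0$, the relation $\mathcal{A}\mathbf{x}^{m-1}=\lambda\mathbf{x}$ together with irreducibility (which for $n\ge 2$ rules out $\mathcal{A}=0$, since the zero tensor is reducible) forces $\mathcal{A}\mathbf{x}^{m-1}\ne 0$ and hence $\lambda>0$.
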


A tensor $\mathcal{A} \in \mathbb{R}_{+}^{[m,n]}$ is said to be semisymmetric \cite{NQ15}
if $A_{i j_2 \ldots j_m}=A_{i i_2 \ldots i_m}$, $1\le i\le n$, $j_2\ldots
j_m$ is any permutation of $i_2\ldots i_m$, $1\le i_2, \ldots, i_m\le n$.
For any $\mathcal{A} \in \mathbb{R}_{+}^{[m,n]}$, we can get a semisymmetric tensor ${\mathcal{A}}_s=(A^{(s)}_{i i_{2}\ldots i_{m}} ) \in \mathbb{R}_{+}^{[m,n]}$ such that $%
\mathcal{A}\mathbf{x}^{m-1}=\mathcal{A}_s\mathbf{x}^{m-1}$, by an averaging
procedure. Specifically, 
for any $1\le i, i_2, \ldots, i_m\le n$, let $j_2^{(1)}\ldots j_m^{(1)}, \ldots, j_2^{(q)}\ldots j_m^{(q)}$ 
be all different permutations of $i_2\ldots i_m$ (we have $q\le (m-1)!$ since some of the $i_k$'s may be the same). 
Then we define $A^{(s)}_{i i_{2}\ldots i_{m}}=\frac{1}{q}\sum_{k=1}^q A_{i j_2^{(k)}\ldots j_m^{(k)}}$. 
The total computational work for obtaining ${\mathcal{A}}_s$ is about $n^m$ flops.

We are going to find an eigenpair $(\mathbf{x}_{\ast}, \lambda_{\ast})$ with
$\mathbf{x}_{\ast}\ge 0$ and $\|\mathbf{x}_{\ast}\|_1=\mathbf{e}^T\mathbf{x}%
_{\ast}=1$, where $\mathbf{e}=[1,\ldots ,1]^{T}.$

We define two vector valued functions $\mathbf{r}:$ $\mathbb{R}_{+}^{n+1}%
\mathbb{\rightarrow R}^{n}$ and \newline
$\mathbf{f}:$ $\mathbb{R}_{+}^{n+1}\mathbb{\rightarrow R}^{n+1}$ as follows:
\begin{equation}
\mathbf{r}(\mathbf{x,}\lambda )=\lambda \mathbf{x}-\mathcal{A}\mathbf{x}%
^{m-1}, \quad \mathbf{f}(\mathbf{x},\lambda )=\left[
\begin{array}{c}
\mathbf{r}(\mathbf{x,}\lambda ) \\
\mathbf{e}^{T}\mathbf{x}-1%
\end{array}%
\right].  \label{eq:Fx}
\end{equation}

Then the Jacobian of $\mathbf{f}(x,\lambda )$ is given by
\begin{equation}
\mathbf{Jf}(\mathbf{x},\lambda )=\left[
\begin{array}{cc}
\lambda I-(m-1)T(\mathbf{x}) & \mathbf{x} \\
\mathbf{e}^{T} & 0%
\end{array}%
\right] ,  \label{eq: graF}
\end{equation}%
where the entries of $T(\mathbf{x})$ are
\begin{equation*}  
T(\mathbf{x})_{ij}=\frac{1}{m-1}\frac{\partial}{\partial x_j}\left( \mathcal{%
A}\mathbf{x}^{m-1}\right)_i.
\end{equation*}
When the tensor is semisymmetric, we have 
by the proof of \cite[Lemma 3.3]{NQ15} that
\begin{equation}  \label{mT}
T(\mathbf{x})_{ij}= \sum_{i_3, \ldots, i_m=1}^n A_{i j i_{3}\ldots
i_{m}}x_{i_{3}}\ldots x_{i_{m}}, 
\end{equation}
from which we obtain 
\begin{equation}  \label{Txx}
T(\mathbf{x}){\mathbf{x}}=\mathcal{A}{\mathbf{x}}^{m-1}.
\end{equation}
Note that \eqref{Txx} holds even when $\mathcal{A}$ is not semisymmetric since 
$\mathcal{A}\mathbf{x}^{m-1}=\mathcal{A}_s\mathbf{x}^{m-1}$. 

The following result has been proved in \cite{LGL17}.
\begin{lemma}
\label{irreducible} Let $\mathcal{A} \in \mathbb{R}_{+}^{[m,n]}$ be  weakly irreducible and $\mathbf{x}$ be a positive vector. Then the
nonnegative matrix $T(\mathbf{x})$ is irreducible.
\end{lemma}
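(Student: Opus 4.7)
The plan is to exploit the formula \eqref{mT} for $T(\mathbf{x})$ to translate the combinatorial condition in Definition \ref{weakly2} into a statement about zero patterns of $T(\mathbf{x})$, and then invoke the standard characterization of irreducibility of a nonnegative matrix: $M\ge 0$ is irreducible if and only if for every nonempty proper subset $S\subsetneq\{1,\ldots,n\}$ there exist $i\in S$ and $j\notin S$ with $M_{ij}\ne 0$. So the target is to show that given such an $S$, some off-diagonal-block entry $T(\mathbf{x})_{ij}$ (with $i\in S$, $j\notin S$) is positive.

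First I would reduce to the semisymmetric case. Because $\mathbf{Jf}$ is constructed from derivatives of $\mathcal{A}\mathbf{x}^{m-1}=\mathcal{A}_s\mathbf{x}^{m-1}$, the matrix $T(\mathbf{x})$ is the same whether we compute it from $\mathcal{A}$ or from $\mathcal{A}_s$, so formula \eqref{mT} is available for use. I would then check the easy fact that $\mathcal{A}_s$ is weakly irreducible whenever $\mathcal{A}$ is: if $A_{i_1 i_2\ldots i_m}\ne 0$, then the averaging definition forces $A^{(s)}_{i_1 i_2\ldots i_m}>0$ as well, so the index witness provided by Definition \ref{weakly2} for $\mathcal{A}$ is also a witness for $\mathcal{A}_s$.

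Now fix a nonempty proper $S\subsetneq\{1,\ldots,n\}$. By weak irreducibility of $\mathcal{A}_s$, pick $i_1\in S$ and $i_2,\ldots,i_m$ with some $i_q\notin S$ such that $A^{(s)}_{i_1 i_2\ldots i_m}\ne 0$. Here is where I would use semisymmetry of $\mathcal{A}_s$: permuting the last $m-1$ indices to place the ``escaping'' index $i_q$ in slot $2$, I obtain indices $i_1\in S$, $j\notin S$ (namely $j=i_q$), and some $k_3,\ldots,k_m$ with $A^{(s)}_{i_1 j k_3\ldots k_m}\ne 0$. Then \eqref{mT} gives
\begin{equation*}
T(\mathbf{x})_{i_1 j}=\sum_{i_3,\ldots,i_m=1}^{n} A^{(s)}_{i_1 j i_3\ldots i_m}\,x_{i_3}\cdots x_{i_m}\ \ge\ A^{(s)}_{i_1 j k_3\ldots k_m}\,x_{k_3}\cdots x_{k_m}\ >\ 0,
\end{equation*}
since $\mathbf{x}$ is strictly positive and every summand is nonnegative. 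Because $S$ was arbitrary, the irreducibility criterion is verified.

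I do not anticipate any serious obstacle. The only subtle step is the semisymmetry-permutation argument that turns the weak-irreducibility witness (which only guarantees \emph{some} index $i_q$, $q\in\{2,\ldots,m\}$, lies outside $S$) into a witness with the outside index specifically in the second slot; this is needed because $T(\mathbf{x})_{ij}$ couples only the first two tensor indices. Everything else is bookkeeping.
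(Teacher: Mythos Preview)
The paper does not prove this lemma at all; it simply records it as a result already established in \cite{LGL17}. Your proposal is a correct, self-contained argument, and indeed mirrors the natural route one would take: pass to the semisymmetric averaging $\mathcal{A}_s$ (which preserves both $T(\mathbf{x})$ and weak irreducibility, as you observe), use semisymmetry to rotate the ``escaping'' index $i_q\notin S$ into the second slot, and then read off $T(\mathbf{x})_{i_1 j}>0$ from \eqref{mT} and the strict positivity of $\mathbf{x}$. The only point worth a sentence of care is your claim that weak irreducibility transfers from $\mathcal{A}$ to $\mathcal{A}_s$: this holds because all entries are nonnegative, so $A_{i_1 i_2\ldots i_m}>0$ forces the average $A^{(s)}_{i_1 i_2\ldots i_m}$ to be positive as well; you state this, and it is correct.
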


In this paper we will pay special attention to  transition probability tensors. 
\begin{definition}[\protect\cite{CZ13, LN14}]
A tensor $\mathcal{A}  \in \mathbb{R}_{+}^{[m,n]}$ is called a transition probability
tensor if $\mathcal{A}=(A_{i_{1}i_{2}\ldots i_{m}})$ satisfies%
\begin{equation*}
\sum_{i=1}^{n}A_{ii_{2}\ldots i_{m}}=1,\text{ } 1\le i_2, \ldots, i_m\le n. 
\end{equation*}
\end{definition}

Here is a main theoretical result about the $Z$-eigenvalue problem for transition probability tensors.  
\begin{theorem}[\protect\cite{CZ13, LN14}]
Let  $\mathcal{A} \in \mathbb{R}_{+}^{[m,n]}$  be a  transition probability tensor. 
 Then $1$ is the unique $Z$-eigenvalue of  $\mathcal{A}$ with a corresponding nonnegative eigenvector $\mathbf{x}$.  If $\mathcal{A}$   is irreducible, then every nonnegative eigenvector $\mathbf{x}$  must be positive. 
\end{theorem}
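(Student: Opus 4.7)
The plan is to treat the two assertions separately: first identify $\lambda=1$ as the only possible eigenvalue attached to a nonnegative $Z$-eigenvector (and confirm such a pair exists), then deduce positivity of the eigenvector under irreducibility via a Perron-type contradiction.

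For existence, I would invoke the previous theorem, which guarantees that any $\mathcal{A}\in\mathbb{R}_+^{[m,n]}$ admits a nonnegative $Z$-eigenpair $(\mathbf{x},\lambda)$ with $\|\mathbf{x}\|_1=1$. To pin down $\lambda$, I would exploit the column-stochastic structure by left-multiplying the eigenequation $\mathcal{A}\mathbf{x}^{m-1}=\lambda\mathbf{x}$ by $\mathbf{e}^T$. On the right this yields $\lambda\,\mathbf{e}^T\mathbf{x}=\lambda$ since $\mathbf{x}\ge 0$ and $\mathbf{e}^T\mathbf{x}=1$. On the left, the defining identity $\sum_{i=1}^n A_{i i_2\ldots i_m}=1$ lets me interchange sums:
\begin{equation*}
\mathbf{e}^T(\mathcal{A}\mathbf{x}^{m-1})=\sum_{i_2,\ldots,i_m=1}^n\left(\sum_{i=1}^n A_{i i_2\ldots i_m}\right) x_{i_2}\cdots x_{i_m}=(\mathbf{e}^T\mathbf{x})^{m-1}=1.
\end{equation*}
Equating the two expressions forces $\lambda=1$. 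Because this calculation did not use any particular feature of $\mathbf{x}$ beyond being a nonnegative unit ($1$-norm) eigenvector, it simultaneously establishes both existence and uniqueness of the eigenvalue~$1$ within the nonnegative cone.

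For the irreducibility half I would argue by contradiction. Assume $\mathcal{A}$ is irreducible and that some nonnegative eigenvector $\mathbf{x}$ (which now satisfies $\mathcal{A}\mathbf{x}^{m-1}=\mathbf{x}$) has a zero component. Set $S=\{i:x_i=0\}$; since $\mathbf{x}\neq\mathbf{0}$, $S$ is a nonempty proper subset of $\{1,\ldots,n\}$. For every $i\in S$, the $i$th coordinate of the eigenequation gives
\begin{equation*}
0=x_i=\sum_{i_2,\ldots,i_m=1}^n A_{i i_2\ldots i_m}\,x_{i_2}\cdots x_{i_m},
\end{equation*}
a sum of nonnegative terms. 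Hence every term vanishes; in particular, whenever $i_2,\ldots,i_m\notin S$ the product $x_{i_2}\cdots x_{i_m}$ is strictly positive, so the corresponding entry $A_{i i_2\ldots i_m}$ must be zero. This is precisely the reducibility condition witnessed by $S$, contradicting irreducibility. Therefore $\mathbf{x}>0$.

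No step looks like a serious obstacle: the only subtlety is keeping track of the distinction between $\|\cdot\|_1$ and $\|\cdot\|_2$ normalization (the theorem is stated for $Z_1$-eigenpairs, which is what makes the $\mathbf{e}^T$ trick collapse so cleanly). The argument above would need to be adjusted with an extra factor $\|\mathbf{x}\|_2^{m-2}$ if one instead wished to state the result for $Z_2$-eigenpairs, but here the $1$-norm convention set earlier in the paper makes the calculation direct.
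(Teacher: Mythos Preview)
Your proof is correct. Note, however, that the paper does not supply its own proof of this theorem: it is stated as a cited result from \cite{CZ13, LN14}, so there is no ``paper's approach'' to compare against. Your argument for $\lambda=1$ via $\mathbf{e}^T(\mathcal{A}\mathbf{x}^{m-1})=(\mathbf{e}^T\mathbf{x})^{m-1}$ is the standard one, and your contradiction for the irreducible case is exactly the usual Perron--Frobenius-style zero-set argument (and in fact reproves the second assertion of the preceding cited theorem from \cite{CPZ13} as well).
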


The following result will be needed shortly. 

\begin{lemma}\label{ss} 
Let $\mathcal{A} \in \mathbb{R}_{+}^{[m,n]}$  be a  transition probability tensor and 
$\mathcal{A}_s=(A^{(s)}_{i_1 i_2 \ldots i_m})$ be the semisymmetric tensor obtained from $\mathcal{A}$ by an averaging procedure. Then $\mathcal{A}_s$  is also a  transition probability tensor. 
\end{lemma}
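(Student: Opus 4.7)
The plan is to verify the defining column-sum property directly from the averaging formula given in the preliminaries. Since $\mathcal{A}_s$ has the same entries as $\mathcal{A}$ up to averaging over permutations of the last $m-1$ indices, and permuting the last $m-1$ indices does not touch the summation index $i$, the column-sum property of $\mathcal{A}$ should pass to $\mathcal{A}_s$ essentially automatically.

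Concretely, I would proceed as follows. First, note that $\mathcal{A}_s \in \mathbb{R}_+^{[m,n]}$ is immediate because each entry $A^{(s)}_{i i_2 \ldots i_m}$ is a nonnegative rational combination of entries of $\mathcal{A}$, which are themselves nonnegative. Next, fix any indices $i_2, \ldots, i_m$ and let $j_2^{(1)}\ldots j_m^{(1)}, \ldots, j_2^{(q)}\ldots j_m^{(q)}$ be all distinct permutations of $i_2 \ldots i_m$. Then from the definition
\begin{equation*}
\sum_{i=1}^n A^{(s)}_{i i_2 \ldots i_m} = \sum_{i=1}^n \frac{1}{q}\sum_{k=1}^q A_{i j_2^{(k)}\ldots j_m^{(k)}} = \frac{1}{q}\sum_{k=1}^q \left(\sum_{i=1}^n A_{i j_2^{(k)}\ldots j_m^{(k)}}\right),
\end{equation*}
where the interchange of finite sums is justified trivially. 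Finally, since $\mathcal{A}$ is a transition probability tensor, each inner sum $\sum_{i=1}^n A_{i j_2^{(k)}\ldots j_m^{(k)}}$ equals $1$ regardless of the particular permutation chosen, so the expression collapses to $\frac{1}{q}\cdot q = 1$.

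There is no serious obstacle here; the content of the lemma is a one-line unpacking of the averaging procedure and the fact that the normalizing condition $\sum_i A_{i i_2 \ldots i_m}=1$ holds uniformly over all choices of $(i_2,\ldots,i_m)$, so that averaging over any subset of such tuples (in particular, over all permutations of a fixed tuple) again yields $1$. The only thing to be careful about is to record that $q$ and the list of permutations depend on $(i_2,\ldots,i_m)$, but this does not affect the argument.
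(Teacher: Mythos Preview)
Your proof is correct and follows essentially the same approach as the paper: fix $i_2,\ldots,i_m$, expand $\sum_i A^{(s)}_{i i_2\ldots i_m}$ using the averaging formula, interchange the two finite sums, and use that each inner sum equals $1$ because $\mathcal{A}$ is a transition probability tensor. Your additional remark that $\mathcal{A}_s$ is nonnegative is a harmless completeness check the paper omits.
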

\begin{proof}
For any $1\le i, i_2, \ldots, i_m\le n$, let $j_2^{(1)}\ldots j_m^{(1)}, \ldots, j_2^{(q)}\ldots j_m^{(q)}$ 
be all different permutations of $i_2\ldots i_m$. 
Then 
$$
\sum_{i=1}^{n}A^{(s)}_{ii_{2}\ldots i_{m}}
=\sum_{i=1}^{n}\left (\frac{1}{q}\sum_{k=1}^q A_{ij_2^{(k)}\ldots j_m^{(k)}}\right )
=
\frac{1}{q}\sum_{k=1}^q  \left (\sum_{i=1}^{n} A_{ij_2^{(k)}\ldots j_m^{(k)}}\right )
=\frac{1}{q}\sum_{k=1}^q 1=1, 
$$
as required. \end{proof}

The following result is given in \cite[Lemma 5.2]{CZ13}, but the proof there is incomplete. 

\begin{lemma}
\label{leftev}
Let $\mathcal{A} \in \mathbb{R}_{+}^{[m,n]}$  be a transition probability tensor and $\mathbf{x}$ be a positive vector with $%
\left\Vert \mathbf{x}\right\Vert _{1}=1$. Then 
$\mathbf{e}^{T}T(\mathbf{x})=  \mathbf{e}^{T}$, i.e., 
$T(\mathbf{x})$ is a (column) stochastic matrix. 
\end{lemma}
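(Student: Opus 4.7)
The plan is to reduce immediately to the semisymmetric case and then compute the column sums of $T(\mathbf{x})$ directly using the explicit formula \eqref{mT}. The reduction is justified because $T(\mathbf{x})$ is defined purely from the Jacobian of the map $\mathbf{x}\mapsto \mathcal{A}\mathbf{x}^{m-1}$, and this map coincides with $\mathbf{x}\mapsto \mathcal{A}_s\mathbf{x}^{m-1}$. Hence the Jacobian, and therefore $T(\mathbf{x})$, depends only on $\mathcal{A}_s$, and we may assume $\mathcal{A}$ itself is semisymmetric when computing $T(\mathbf{x})$. This is presumably the step that was glossed over in \cite{CZ13}, since the defining formula \eqref{mT} is not valid in the non-semisymmetric case.

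Once we are in the semisymmetric setting, Lemma \ref{ss} tells us that $\mathcal{A}_s$ is still a transition probability tensor, so $\sum_{i=1}^n A^{(s)}_{iji_3\ldots i_m}=1$ for every choice of $j,i_3,\ldots,i_m$. Then for the $j$th column sum of $T(\mathbf{x})$, using \eqref{mT} and swapping the order of summation gives
\begin{equation*}
\sum_{i=1}^n T(\mathbf{x})_{ij}
=\sum_{i_3,\ldots,i_m=1}^n x_{i_3}\cdots x_{i_m}\sum_{i=1}^n A^{(s)}_{iji_3\ldots i_m}
=\sum_{i_3,\ldots,i_m=1}^n x_{i_3}\cdots x_{i_m}.
\end{equation*}

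Finally, the remaining sum factors as $\bigl(\sum_{i_3} x_{i_3}\bigr)\cdots \bigl(\sum_{i_m} x_{i_m}\bigr)=\|\mathbf{x}\|_1^{m-2}=1$, since $\|\mathbf{x}\|_1=1$. This gives $\mathbf{e}^T T(\mathbf{x})=\mathbf{e}^T$, as required. The argument is a short calculation once the semisymmetric reduction is in place; the only subtlety is making that reduction explicit so that the formula \eqref{mT} is legitimately applicable.
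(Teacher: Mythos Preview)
Your proof is correct and follows essentially the same approach as the paper: reduce to the semisymmetric tensor $\mathcal{A}_s$ (so that formula \eqref{mT} applies), invoke Lemma~\ref{ss} to ensure $\mathcal{A}_s$ remains a transition probability tensor, and then compute the column sums directly. The paper's proof is terser, merely citing ``a direct computation \ldots\ as in \cite{CZ13}'' for the final step, whereas you spell out that computation explicitly; you also correctly identify the gap in \cite{CZ13} that the paper flags.
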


\begin{proof}
The proof in \cite{CZ13} starts with the equality in \eqref{mT}, which does not hold in general 
when $\mathcal{A}$ is not semisymmetric. 
Let $\mathcal{A}_s=(A^{(s)}_{i_1 i_2 \ldots i_m})$ be the semisymmetric tensor obtained from $\mathcal{A}$ by an averaging procedure. 
Then 
\begin{equation*}  
T(\mathbf{x})_{ij}=\frac{1}{m-1}\frac{\partial}{\partial x_j}\left( \mathcal{%
A}\mathbf{x}^{m-1}\right)_i= \frac{1}{m-1}\frac{\partial}{\partial x_j} \left (\mathcal{%
A}_s\mathbf{x}^{m-1}\right)_i=
\sum_{i_3, \ldots, i_m=1}^n A^{(s)}_{i j i_{3}\ldots
i_{m}}x_{i_{3}}\ldots x_{i_{m}}.
\end{equation*}
By Lemma \ref{ss},  $\mathcal{A}_s$ is still a 
transition probability tensor. 
 Now, a direct computation shows that $\left( \mathbf{e}^{T}T(\mathbf{x})\right) _{j}=1$ for each $j$, as in \cite{CZ13}. 
\end{proof}

We also have the following inclusion result for the $Z$-eigenvalue $1$ of a  transition probability tensor.

\begin{lemma}
\label{minmax}
Let $\mathcal{A} \in \mathbb{R}_{+}^{[m,n]}$  be a transition probability tensor. For any positive vector $\mathbf{v}$ with $\left\Vert
\mathbf{v}\right\Vert _{1}=1$, we have
\begin{equation*}
\min \left( \frac{\mathcal{A}\mathbf{v}^{m-1}}{\mathbf{v}}\right) \leq1 \leq \max \left( \frac{\mathcal{A}\mathbf{v}^{m-1}}{\mathbf{v%
}}\right) .
\end{equation*}
\end{lemma}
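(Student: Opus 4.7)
The plan is to squeeze the scalar $1$ between $\alpha := \min(\mathcal{A}\mathbf{v}^{m-1}/\mathbf{v})$ and $\beta := \max(\mathcal{A}\mathbf{v}^{m-1}/\mathbf{v})$ by exploiting the fact that the matrix $T(\mathbf{v})$ is column stochastic (Lemma \ref{leftev}) together with the identity $T(\mathbf{v})\mathbf{v} = \mathcal{A}\mathbf{v}^{m-1}$ from \eqref{Txx}. The strategy is essentially the standard min--max bound used for the Perron eigenvalue of a nonnegative matrix, transplanted to the tensor setting via $T(\mathbf{v})$.

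Concretely, I would first write down the componentwise inequalities
\[
\alpha\,\mathbf{v} \le \mathcal{A}\mathbf{v}^{m-1} \le \beta\,\mathbf{v},
\]
which hold by the definitions of $\alpha$ and $\beta$ (using that $\mathbf{v}>0$ so that no sign flips occur in dividing or multiplying by components of $\mathbf{v}$). Next, I would rewrite the middle term using \eqref{Txx} as $T(\mathbf{v})\mathbf{v}$, and then multiply through by the row vector $\mathbf{e}^T$ on the left. Applying Lemma \ref{leftev}, which gives $\mathbf{e}^T T(\mathbf{v}) = \mathbf{e}^T$ precisely under the hypotheses $\mathbf{v}>0$ and $\|\mathbf{v}\|_1=1$, the middle quantity collapses to $\mathbf{e}^T\mathbf{v} = 1$, yielding
\[
\alpha\,(\mathbf{e}^T\mathbf{v}) \le 1 \le \beta\,(\mathbf{e}^T\mathbf{v}).
\]
Since $\mathbf{e}^T\mathbf{v}=1>0$, division gives $\alpha \le 1 \le \beta$, which is exactly the claim.

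There is no real obstacle here beyond ensuring that the two ingredients are legitimately applicable: \eqref{Txx} holds for every tensor (the paper explicitly observes this even without semisymmetry), and Lemma \ref{leftev} requires only that $\mathcal{A}$ be a transition probability tensor with $\mathbf{v}>0$ and $\|\mathbf{v}\|_1=1$, which are exactly our hypotheses. So the proof is essentially a two-line computation once the correct objects are lined up.
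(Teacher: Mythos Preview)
Your argument is correct. It differs from the paper's proof in one respect worth noting: the paper invokes the Collatz--Wielandt (Perron--Frobenius) inequalities to sandwich $\rho(T(\mathbf{v}))$ between $\min(T(\mathbf{v})\mathbf{v}/\mathbf{v})$ and $\max(T(\mathbf{v})\mathbf{v}/\mathbf{v})$, and then identifies $\rho(T(\mathbf{v}))=1$ from stochasticity. Your route bypasses spectral theory altogether: you take the componentwise bounds $\alpha\mathbf{v}\le T(\mathbf{v})\mathbf{v}\le\beta\mathbf{v}$ and simply apply $\mathbf{e}^T$, using Lemma~\ref{leftev} to collapse the middle term to $\mathbf{e}^T\mathbf{v}=1$. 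This is more elementary---indeed, one could even skip $T(\mathbf{v})$ and \eqref{Txx} entirely and compute $\mathbf{e}^T\mathcal{A}\mathbf{v}^{m-1}=(\mathbf{e}^T\mathbf{v})^{m-1}=1$ directly from the transition-probability property. The paper's version, on the other hand, makes explicit the connection to the Perron eigenvalue of $T(\mathbf{v})$, which is conceptually relevant elsewhere in the analysis (e.g., Proposition~\ref{prop1}).
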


\begin{proof}
By the Perron--Frobenius theorem for nonnegative matrices \cite{BPl94,V00}, we get%
\begin{equation*}
\min  \left( \frac{\mathcal{A}\mathbf{v}^{m-1}}{\mathbf{v}}\right) 
 =\min \left( \frac{T(\mathbf{v})\mathbf{v}}{\mathbf{v}}\right) \leq \rho (T(\mathbf{v}))\leq \max
\left( \frac{T(\mathbf{v})\mathbf{v}}{\mathbf{v}}\right)=
\max \left( \frac{\mathcal{A}\mathbf{v}^{m-1}}{\mathbf{v}}\right). 
\end{equation*}%
Since $T(\mathbf{v})$ is a stochastic matrix by Lemma \ref{leftev}, we have
$\rho (T(\mathbf{v}))=1$. 
\end{proof}

\section{A modified Newton iteration}\label{sec3}

In this section we present a modified Newton iteration for finding a nonnegative eigenpair of a nonnegative tensor $\mathcal{A}$. 
In the derivation, we assume that the nonnegative eigenpair is positive (which is the case when $\mathcal{A}$ is irreducible). 
But the resulting algorithm will also be applicable in finding a nonnegative eigenpair with some zero components. 

Suppose that a nonnegative tensor $\mathcal{A}$ has a positive eigenpair $(\mathbf{x}_*, \lambda_*)$. 
We may try to find it by using Newton's method to solve $\mathbf{f}(\mathbf{x,}%
\lambda )=0$, where $\mathbf{f}$ is defined in \eqref{eq:Fx}. 
It is clear that  $\mathbf{J}\mathbf{f}(\mathbf{x}, \lambda)$,  
 the Jacobian of $\mathbf{f}$, satisfies a Lipschitz condition at $(\mathbf{x}_{\ast },
\lambda_{\ast} )$ since its Fr\'echet derivative is continuous in a
neighborhood of $(\mathbf{x}_{\ast}, \lambda_{\ast})$. 
We assume that  
\begin{equation}
\mathbf{Jf}(\mathbf{x}_*,\lambda_* )=\left[
\begin{array}{cc}
\lambda_* I-(m-1)T(\mathbf{x}_*) & \mathbf{x}_* \\
\mathbf{e}^{T} & 0%
\end{array}%
\right]   \label{eq: graFs}
\end{equation}%
is nonsingular. 
It is then well known that if $(\widehat{\mathbf{x}}_{0},\widehat{%
\lambda }_{0})$ is sufficiently close to $(\mathbf{x}_*, \lambda_*)$ then the Newton sequence 
$(\widehat{\mathbf{x}}_{k},\widehat{%
\lambda }_{k})$ is well defined and converges to $(\mathbf{x}_*, \lambda_*)$ quadratically. 
However,  if $(\widehat{\mathbf{x}}_{0},\widehat{%
\lambda }_{0})$ is not sufficiently close to $(\mathbf{x}_*, \lambda_*)$ the Newton sequence 
(if defined) usually does not converge to $(\mathbf{x}_*, \lambda_*)$ or any other positive eigenpair. 
We would like to present a modified Newton iteration that has guaranteed local quadratic convergence and has a good chance of finding 
a positive eigenpair starting from $(\widehat{\mathbf{x}}_{0},\widehat{%
\lambda }_{0})$, where $\widehat{\mathbf{x}}_{0}$ is any positive vector with unit $1$-norm and $\widehat{%
\lambda }_{0}$ is suitably chosen. 
To this end, we examine the Newton iteration more closely.

Given a positive pair $(\widehat{\mathbf{x}}_{k},\widehat{%
\lambda }_{k})$  sufficiently close to $(\mathbf{x}_*, \lambda_*)$, Newton's method produces the next approximation $(\widehat{%
\mathbf{x}}_{k+1},\widehat{\lambda }_{k+1})$ as follows:

\begin{align}
\left[
\begin{array}{cc}
\widehat{\lambda}_k I-(m-1)T({\widehat{\mathbf{x}}}_{k}) & \widehat{\mathbf{x%
}}_{k} \\
{\mathbf{e}}^{T} & 0%
\end{array}%
\right] \left[
\begin{array}{c}
\mathbf{d}_{k} \\
\delta _{k}%
\end{array}%
\right] & =\left[
\begin{array}{c}
\mathbf{r}(\widehat{\mathbf{x}}_{k},\widehat{\lambda }_{k}) \\
{\mathbf{e}}^{T}\widehat{\mathbf{x}}_{k}-1%
\end{array}%
\right],  \label{eq:step1} \\
\widehat{\mathbf{x}}_{k+1}& =\widehat{\mathbf{x}}_{k}\,-\mathbf{d}_{k},
\label{eq:step2} \\
\widehat{\lambda }_{k+1}& =\widehat{\lambda }_{k}-\delta _{k}.
\label{eq:step3}
\end{align}

We assume that $\widehat{\lambda}_k I-(m-1)T({\widehat{\mathbf{x}}}_{k})$ is nonsingular, but we do not assume that 
$\widehat{\lambda}_* I-(m-1)T({\widehat{\mathbf{x}}}_*)$ is nonsingular. 

Assuming ${\mathbf{e}}^{T}\widehat{%
\mathbf{x}}_{k}=1$, we use block Gaussian elimination in \eqref{eq:step1} to obtain
\begin{equation}
\left ({\mathbf{e}}^{T} \widehat{\mathbf{w}}_{k} \right ) \delta _{k}={\mathbf{e}}^{T} \left (\widehat{\lambda}_k I-(m-1)T({%
\widehat{\mathbf{x}}}_{k})\right )^{-1}\mathbf{r}(\widehat{\mathbf{x}}_{k},%
\widehat{\lambda }_{k}) ,
\label{eqdelta}
\end{equation}%
where we have let
\begin{equation}  \label{vw}
\widehat{\mathbf{w}}_{k}=\left (\widehat{\lambda}_k I-(m-1)T({\widehat{%
\mathbf{x}}}_{k})\right )^{-1}\widehat{\mathbf{x}}_{k}.
\end{equation}
Since
\begin{eqnarray}
\mathbf{r}(\widehat{\mathbf{x}}_{k},\widehat{\lambda }_{k}) &=&\frac{1}{m-1}%
\left ((m-2)\widehat{\lambda}_k\widehat{\mathbf{x}}_k+\widehat{\lambda}_k%
\widehat{\mathbf{x}}_k -(m-1)\mathcal{A}\widehat{\mathbf{x}}_k^{m-1}\right )
\nonumber \\
&=&\frac{1}{m-1}\left ((m-2)\widehat{\lambda}_k\widehat{\mathbf{x}}_k+\left (%
\widehat{\lambda}_k I -(m-1)T(\widehat{\mathbf{x}}_k) \right ) \widehat{%
\mathbf{x}}_k \right ), \label{rf1}
\end{eqnarray}
we have by \eqref{eqdelta}, \eqref{vw}, and ${\mathbf{e}}^{T}\widehat{%
\mathbf{x}}_{k}=1$ that
\begin{equation*}
\left ({\mathbf{e}}^{T} \widehat{\mathbf{w}}_{k} \right ) \left (\delta _{k}-\frac{m-2}{m-1}\widehat{\lambda }_{k}\right )=\frac{1}{m-1}. 
\end{equation*}%
Thus for $m\ge 3$ and  $(\widehat{\mathbf{x}}_{k},\widehat{%
\lambda }_{k})$  sufficiently close to $(\mathbf{x}_*, \lambda_*)$, 
${\mathbf{e}}^{T} \widehat{\mathbf{w}}_{k}\approx \frac{1}{(2-m)\lambda_*}$. In particular, 
${\mathbf{e}}^{T} \widehat{\mathbf{w}}_{k}<0$ and  
\begin{equation}
\delta _{k}=\frac{m-2}{m-1}\, \widehat{\lambda}_k +\frac{1}{(m-1){\mathbf{e}}%
^{T} \widehat{\mathbf{w}}_{k}}.  \label{eqdelta2}
\end{equation}%
Then by \eqref{eq:step1} and \eqref{vw}--\eqref{eqdelta2} we get 
\begin{equation}
\mathbf{d}_{k}=\frac{1}{m-1}\, \widehat{\mathbf{x}}_{k}-\frac{1}{(m-1) {%
\mathbf{e}}^{T}\widehat{\mathbf{w}}_{k}}\widehat{\mathbf{w}}_{k} .
\label{eqdk}
\end{equation}
Thus for $m\ge 3$ and  $(\widehat{\mathbf{x}}_{k},\widehat{%
\lambda }_{k})$  sufficiently close to $(\mathbf{x}_*, \lambda_*)$, 
$ \widehat{\mathbf{w}}_{k}\approx \frac{1}{(2-m)\lambda_*}\mathbf{x}_*$. In particular, 
$ \widehat{\mathbf{w}}_{k}<0$.
From  (\ref{eqdk}) and  (\ref{eqdelta2}), we
have
\begin{align}
\widehat{\mathbf{x}}_{k+1}& =\widehat{\mathbf{x}}_{k}\,-\mathbf{d}_{k}=
\frac{1}{m-1}\left ( (m-2) \widehat{\mathbf{x}}_{k}+\frac{1}{\mathbf{e}^{T}%
\widehat{\mathbf{w}}_{k}}\widehat{\mathbf{w}}_{k} \right ),  \label{eq:newtonup} \\
\widehat{\lambda }_{k+1}& =\widehat{\lambda }_{k}-\delta _{k}=\frac{1}{m-1}\left (
\widehat{\lambda }_{k}-\frac{1}{{\mathbf{e}}^{T}\widehat{\mathbf{w}}%
_{k} }\right ).  \label{ndown}
\end{align}

When $\widehat{\mathbf{w}}_{k}<0$, we have $\widehat{\mathbf{x}}_{k+1}>0$. 
However,  if $(\widehat{\mathbf{x}}_{k},\widehat{%
\lambda }_{k})$  is not sufficiently close to $(\mathbf{x}_*, \lambda_*)$, we do not always have  $\widehat{\mathbf{w}}_{k}<0$. 
In fact, it is possible to have the opposite: $\widehat{\mathbf{w}}_{k}>0$. In this case,  we also have $\widehat{\mathbf{x}}_{k+1}>0$. 

We now introduce some modifications to the Newton iteration. 

If  $\widehat{\mathbf{w}}_{k}$ has both positive and negative components, then we use a post-processing procedure, but avoid drastic changes. This is the intervention we mentioned in Section~\ref{sec1}. 
 Let $s_k=(\max \widehat{\mathbf{w}}_{k})( \min \widehat{\mathbf{w}}_{k})$. 
 We will use the following simple procedure:
\begin{equation}
{\mathbf{w}}_k=\left\{
\begin{array}{ll}
\max (\widehat{\mathbf{w}}_{k}, \mathbf{0})  & \text{if }  s_k<0 \text{ and }      |\max \widehat{\mathbf{w}}_{k}|>|\min \widehat{\mathbf{w}}_{k}|,                   \\
 \min (\widehat{\mathbf{w}}_{k}, \mathbf{0})             &  \text{if }  s_k<0 \text{ and }      |\max \widehat{\mathbf{w}}_{k}|\le |\min \widehat{\mathbf{w}}_{k}|,                   \\
\widehat{\mathbf{w}}_{k}   &  \text{if }  s_k\ge 0.
\end{array}%
\right.  \label{wupdate}
\end{equation}%
For example, $\widehat{\mathbf{w}}_{k}=[-100,1]^T$ will be updated to $[-100,0]^T$, rather than $[0,1]^T$. 
After $\widehat{\mathbf{w}}_{k}$ is updated to ${\mathbf{w}}_k$, we have $\widehat{\mathbf{x}}_{k+1}>0$ 
in \eqref{eq:newtonup}.

 Since the formula  \eqref{eq:newtonup} is derived under the assumption that  ${\mathbf{e}}^{T}\widehat{%
\mathbf{x}}_{k}=1$ and since we are looking for a positive $Z_1$-eigenvector,  $\widehat{\mathbf{x}}_{k+1}$ will immediately be normalized to  ${\mathbf{x}}_{k+1}>0$ with unit $1$-norm. For this reason, it is not necessary to keep the factor $1/(m-1)$ in  \eqref{eq:newtonup}.

Instead of using \eqref{ndown} to compute a new approximation to $\lambda_{\ast}$, we can take approximation 
$\lambda_{k+1}$ to be any value in the interval $[\underline{\lambda }_{k+1}, \overline{\lambda }_{k+1}],$
where 
\begin{equation}  \label{lamx}
\underline{\lambda }_{k+1} =\min \left( \frac{\mathcal{A}{\mathbf{x}}%
_{k+1}^{m-1}}{{\mathbf{x}}_{k+1}}\right), \quad 
\overline{\lambda }_{k+1} =\max \left( \frac{\mathcal{A}{\mathbf{x}}%
_{k+1}^{m-1}}{{\mathbf{x}}_{k+1}}\right), 
\end{equation}
such that ${\lambda}_{k+1} I-(m-1)T({\mathbf{x}_{k+1}})$ is not singular or nearly singular. 
The default value is $\lambda_{k+1}=\overline{\lambda }_{k+1}$, but a smaller value is to be used if 
$\overline{\lambda}_{k+1} I-(m-1)T({\mathbf{x}}_{k+1})$ is singular or nearly singular 
(We have not yet seen the need to do so in our experiments). 

We then have the following modified Newton iteration (Algorithm \ref%
{alg:Nini}) for finding a nonnegative eigenpair of a  nonnegative tensor $\mathcal{A}$. 

\begin{algorithm}
\begin{enumerate}
  \item   Given $\bx_0 > 0$ with $\Vert \bx_0\Vert_1 =1$, and ${\sf tol}>0$.
\item Compute  $\overline{\lambda}_{0}= \max \left( \frac{\mathcal{A}\mathbf{x}_{0}^{m-1}}{\mathbf{x}_{0}}\right)$ and 
 $\underline{\lambda}_{0}= \min \left( \frac{\mathcal{A}\mathbf{x}_{0}^{m-1}}{\mathbf{x}_{0}}\right)$. 
  \item   {\bf for} $k =0,1,2,\dots$    {\bf until} $\left\Vert \mathcal{A}\mathbf{x}_{k}^{m-1} - \overline{\lambda }_{k}\mathbf{x}_{k} \right\Vert_1  <{\sf tol}$.
\item \quad Choose $\lambda_{k}\in [\underline{\lambda }_{k}, \overline{\lambda }_{k}]$
such that ${\lambda}_{k} I-(m-1)T({\mathbf{x}_{k}})$ is nonsingular. 
  \item   \quad Solve the linear system $\left ( {\lambda}_k I- (m-1) T(\mathbf{x}_{k})\right) \widehat{\mathbf{w}}_{k}=\mathbf{x}_{k}$.
  \item   \quad Determine the vector $\bw_{k}$ by \eqref{wupdate}.
  \item   \quad Compute the vector $\widetilde{\mathbf{x}}_{k+1} =(m-2)\mathbf{x}_{k}\,+  \mathbf{w}_k/  (\mathbf{e}^T  \mathbf{w}_{k})$.
  \item   \quad Normalize the vector $\widetilde{\mathbf{x}}_{k+1}$:   $\bx_{k+1}= \widetilde{\mathbf{x}}_{k+1}/\Vert \widetilde{\mathbf{x}}_{k+1}\Vert_1$.
  \item   \quad Compute $\overline{\lambda }_{k+1} =\max \left( \frac{\mathcal{A}\mathbf{x}_{k+1}^{m-1}}{\mathbf{x}_{k+1}}\right)$ and
  $\underline{\lambda }_{k+1} =\min \left( \frac{\mathcal{A}\mathbf{x}_{k+1}^{m-1}}{\mathbf{x}_{k+1}}\right)$.
 
\end{enumerate}
\caption{Modified Newton iteration (MNI)}
\label{alg:Nini}
\end{algorithm}


Note that we have $\mathbf{x}_k>0$ during the iteration even when the algorithm is used to approximate a nonnegative eigenpair 
 $(\mathbf{x}_*, \lambda_*)$ with $\mathbf{x}_*$ having some zero components. Note also that we have $\underline{\lambda }_{k}<\overline{\lambda }_{k}$ in line 4 of the algorithm, so a suitable $\lambda_k$ can be chosen from the interval when 
$\lambda_k=\overline{\lambda}_k$ does not work (which should be a rare event).

\section{Local quadratic convergence of MNI}
\label{sec4}

In this section, we prove that the modified Newton iteration has local quadratic
convergence under the usual assumption that guarantees the  local quadratic
convergence of the original Newton iteration. 

 The following result is a direct consequence of a basic result of Newton's method; see \cite[Theorem 5.1.2]{K95} for example.

\begin{lemma}
\label{newton} Suppose that $\left( \mathbf{x}_{k}, {\lambda }%
_{k}\right) $ from Algorithm \ref{alg:Nini} is sufficiently close to a positive
eigenpair $\left( \mathbf{x}_{\ast }, \lambda_{\ast}\right)$ of a nonnegative tensor $\mathcal{A}$ and that 
the matrix in \eqref{eq: graFs} is nonsingular. Let $(\widehat{\mathbf{x}}_{k+1},
\widehat{\lambda}_{k+1})$ be obtained by Newton's method as in %
\eqref{eq:newtonup} and \eqref{ndown}, from $\left (\mathbf{x}_{k},
{\lambda }_{k}\right )$ instead of $\left (\widehat{\mathbf{x}}%
_{k}, \widehat{{\lambda }}_{k}\right )$. Then there is a constant $c_1$ such
that
\begin{equation}
\left\Vert \left[
\begin{array}{c}
\widehat{\mathbf{x}}_{k+1} \\
\widehat{\lambda }_{k+1}%
\end{array}%
\right] -\left[
\begin{array}{c}
\mathbf{x}_{\ast } \\
\lambda_{\ast}%
\end{array}%
\right] \right\Vert_1 \leq c_1 \left\Vert \left[
\begin{array}{c}
\mathbf{x}_{k} \\
{\lambda }_{k}%
\end{array}%
\right] -\left[
\begin{array}{c}
\mathbf{x}_{\ast } \\
\lambda_{\ast}%
\end{array}%
\right] \right\Vert_1^{2}.  \label{eq: qudraNT}
\end{equation}
\end{lemma}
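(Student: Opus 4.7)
The plan is simply to verify the hypotheses of the classical local quadratic convergence theorem for Newton's method (cited as \cite[Theorem 5.1.2]{K95}) applied to the nonlinear system $\mathbf{f}(\mathbf{x},\lambda)=\mathbf{0}$ defined in \eqref{eq:Fx}, and then read off \eqref{eq: qudraNT} from that theorem. The essential observation is that the update formulas \eqref{eq:newtonup} and \eqref{ndown}, which produce $(\widehat{\mathbf{x}}_{k+1},\widehat{\lambda}_{k+1})$ from $(\mathbf{x}_k,\lambda_k)$, were derived earlier in the paper as a purely algebraic rearrangement of the Newton step \eqref{eq:step1}--\eqref{eq:step3}, valid whenever $\mathbf{e}^T\mathbf{x}_k=1$. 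The latter holds because Algorithm \ref{alg:Nini} normalizes $\mathbf{x}_k$ to have unit $1$-norm at every iteration. Hence $(\widehat{\mathbf{x}}_{k+1},\widehat{\lambda}_{k+1})$ is exactly the standard Newton iterate from $(\mathbf{x}_k,\lambda_k)$ for the system $\mathbf{f}=\mathbf{0}$.

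To invoke the Newton theorem, I would check the three standard hypotheses at $(\mathbf{x}_*,\lambda_*)$. First, $\mathbf{f}(\mathbf{x}_*,\lambda_*)=\mathbf{0}$, which is just the definition of a $Z$-eigenpair combined with $\|\mathbf{x}_*\|_1=1$. Second, the Jacobian $\mathbf{Jf}$ is Lipschitz continuous in a neighborhood of $(\mathbf{x}_*,\lambda_*)$; in fact every entry of $\mathbf{Jf}$ in \eqref{eq: graF} is a polynomial in the components of $\mathbf{x}$ and linear in $\lambda$, so $\mathbf{Jf}$ is $C^\infty$ and trivially locally Lipschitz (this point is already noted just after \eqref{eq:Fx} in the paper). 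Third, $\mathbf{Jf}(\mathbf{x}_*,\lambda_*)$ in \eqref{eq: graFs} is nonsingular, which is the standing hypothesis of the lemma. The standard theorem then gives an estimate of the form $\|(\widehat{\mathbf{x}}_{k+1},\widehat{\lambda}_{k+1})-(\mathbf{x}_*,\lambda_*)\| \le c_1 \|(\mathbf{x}_k,\lambda_k)-(\mathbf{x}_*,\lambda_*)\|^2$ in some norm; equivalence of norms on $\mathbb{R}^{n+1}$ lets us restate it in the $1$-norm at the price of absorbing a constant into $c_1$, giving \eqref{eq: qudraNT}.

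There is no real obstacle; the only small subtlety is ensuring that the derivation of \eqref{eq:newtonup}--\eqref{ndown} via block Gaussian elimination is legitimate at $(\mathbf{x}_k,\lambda_k)$, which requires both $\lambda_k I-(m-1)T(\mathbf{x}_k)$ and $\mathbf{e}^T\widehat{\mathbf{w}}_k$ to be nonzero. The former is built into Algorithm \ref{alg:Nini} through the choice of $\lambda_k\in[\underline{\lambda}_k,\overline{\lambda}_k]$; the latter holds automatically once $(\mathbf{x}_k,\lambda_k)$ is close enough to $(\mathbf{x}_*,\lambda_*)$, since $\mathbf{e}^T\widehat{\mathbf{w}}_k\to 1/((2-m)\lambda_*)\ne 0$ as observed in the discussion preceding \eqref{eqdelta2}. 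Thus in the regime considered, the pure Newton step from $(\mathbf{x}_k,\lambda_k)$ is well defined and coincides with \eqref{eq:newtonup}--\eqref{ndown}, and the conclusion follows immediately from Kelley's theorem.
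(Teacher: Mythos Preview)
Your proposal is correct and matches the paper's approach: the paper does not give a proof at all, merely stating that the lemma ``is a direct consequence of a basic result of Newton's method; see \cite[Theorem 5.1.2]{K95} for example.'' Your write-up makes explicit exactly the checks (root at $(\mathbf{x}_*,\lambda_*)$, smooth and hence Lipschitz Jacobian, nonsingular Jacobian at the root, and the identification of \eqref{eq:newtonup}--\eqref{ndown} with the pure Newton step when $\mathbf{e}^T\mathbf{x}_k=1$) that the paper leaves implicit.
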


\begin{remark}
We assume that $\mathbf{J}\mathbf{f}(\mathbf{x}_{\ast}, \lambda_{\ast})$ in  \eqref{eq: graFs} is
nonsingular, but we do not assume that ${\lambda}_{\ast} I- (m-1) T(\mathbf{x}_{\ast})$ is nonsingular. 
When $m=2$, the $Z$-eigenvalue problem here is the same as the $H$-eigenvalue problem studied in \cite{LGL17} for all $m\ge 2$, 
and it is shown there that  ${\lambda}_{\ast} I-  T(\mathbf{x}_{\ast})$ is always singular and $\mathbf{J}\mathbf{f}(\mathbf{x}_{\ast}, \lambda_{\ast})$ is always nonsingular. For $m\ge 3$, however, the difference of these two assumptions is not that big, but the assumption that ${\lambda}_{\ast} I- (m-1) T(\mathbf{x}_{\ast})$ is nonsingular is still the stronger assumption. 
Indeed, when ${\lambda}_{\ast} I- (m-1) T(\mathbf{x}_{\ast})$ is nonsingular (for $m\ge 3$), 
$\mathbf{J}\mathbf{f}(\mathbf{x}_{\ast}, \lambda_{\ast})$ in  \eqref{eq: graFs} is
nonsingular if and only if 
$-\mathbf{e}^T \left ({\lambda}_{\ast} I-(m-1)T({\mathbf{x}}_{\ast})\right
)^{-1}{\mathbf{x}}_{\ast} \ne 0$. 
Since  
$$\left ({\lambda}_{\ast} I-(m-1)T({\mathbf{x}}_{\ast})\right )\mathbf{x}_{\ast}
={\lambda}_{\ast}\mathbf{x}_{\ast}    -(m-1)\mathcal{A}\mathbf{x}_{\ast}^{m-1}=(2-m) {\lambda}_{\ast}\mathbf{x}_{\ast}, 
$$
we indeed have  
\begin{equation*}
-\mathbf{e}^T \left ({\lambda}_{\ast} I-(m-1)T({\mathbf{x}}_{\ast})\right
)^{-1}{\mathbf{x}}_{\ast} =-\mathbf{e}^T \frac{1}{(2-m)\lambda_{\ast}}{%
\mathbf{x}}_{\ast}=\frac{1}{(m-2)\lambda_{\ast}}\ne 0.
\end{equation*}
\end{remark}

We will also need the following simple relation between $\left |{\lambda }_{k}-\lambda_{\ast}\right |$ and  
$\|\mathbf{x}_{k}-\mathbf{x}_{\ast }\|_1$. 

\begin{lemma}
\label{relation} Let $\left( \mathbf{x}_{\ast }, \lambda_{\ast}\right)$ be
a positive  eigenpair of   a nonnegative tensor  $\mathcal{A}$. Let $\left\{ (\mathbf{x}_{k}, {\lambda }_{k})\right\}$
be generated by Algorithm \ref{alg:Nini}. Then there is a constant $c_2>0$
such that $\left |{\lambda }_{k}-\lambda_{\ast}\right | \le c_2 \|%
\mathbf{x}_{k}-\mathbf{x}_{\ast }\|_1$ for all $\mathbf{x}_{k}$ sufficiently
close to $\mathbf{x}_{\ast }$.
\end{lemma}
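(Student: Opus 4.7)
The plan is to exploit the fact that $\lambda_k$ is sandwiched between $\underline{\lambda}_k$ and $\overline{\lambda}_k$ in line 4 of Algorithm \ref{alg:Nini}, and to show that both of these bounds are themselves $O(\|\mathbf{x}_k-\mathbf{x}_\ast\|_1)$-close to $\lambda_\ast$. Once this is established, the result follows by a trivial sandwich estimate.

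First I would use the eigenpair equation $\mathcal{A}\mathbf{x}_\ast^{m-1}=\lambda_\ast \mathbf{x}_\ast$ with $\mathbf{x}_\ast>0$ to observe that, for each $i$, the ratio
\[
g_i(\mathbf{x}):=\frac{(\mathcal{A}\mathbf{x}^{m-1})_i}{x_i}
\]
is well defined and smooth on some open convex neighborhood $U\subset \mathbb{R}^n_{>0}$ of $\mathbf{x}_\ast$, and satisfies $g_i(\mathbf{x}_\ast)=\lambda_\ast$ for every $i$. Next I would shrink $U$ so that $\nabla g_i$ is bounded on $U$ uniformly in $i$; the mean value theorem then yields a constant $L>0$ with
\[
|g_i(\mathbf{x})-\lambda_\ast| \le L\,\|\mathbf{x}-\mathbf{x}_\ast\|_1 \qquad (\mathbf{x}\in U,\ 1\le i\le n).
\]

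Then I would take $\mathbf{x}_k\in U$ (which holds for $\mathbf{x}_k$ sufficiently close to $\mathbf{x}_\ast$, since by construction $\mathbf{x}_k>0$) and apply the pointwise bound componentwise to conclude
\[
\lambda_\ast - L\|\mathbf{x}_k-\mathbf{x}_\ast\|_1 \le \underline{\lambda}_k \le \overline{\lambda}_k \le \lambda_\ast + L\|\mathbf{x}_k-\mathbf{x}_\ast\|_1.
\]
Since $\lambda_k\in[\underline{\lambda}_k,\overline{\lambda}_k]$ is chosen in line 4, the desired estimate $|\lambda_k-\lambda_\ast|\le L\|\mathbf{x}_k-\mathbf{x}_\ast\|_1$ follows with $c_2:=L$.

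There is no real obstacle in this argument; the proof is essentially a routine Lipschitz estimate. The only conceptual point worth emphasizing is that the interval $[\underline{\lambda}_k,\overline{\lambda}_k]$ collapses at a linear rate in $\|\mathbf{x}_k-\mathbf{x}_\ast\|_1$ precisely because the common limit of all the component ratios $g_i(\mathbf{x}_\ast)$ equals $\lambda_\ast$, a consequence of $\mathbf{x}_\ast$ being an eigenvector.
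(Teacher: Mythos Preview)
Your proof is correct and follows essentially the same approach as the paper's: both arguments use that $\lambda_k\in[\underline{\lambda}_k,\overline{\lambda}_k]$, that every component ratio $g_i(\mathbf{x}_\ast)=\lambda_\ast$ by the eigenpair relation, and then invoke a Lipschitz bound on the map $\mathbf{x}\mapsto \mathcal{A}\mathbf{x}^{m-1}/\mathbf{x}$ near $\mathbf{x}_\ast$ (the paper phrases this as continuity of the Fr\'echet derivative, you as a componentwise mean value estimate). The only cosmetic difference is that the paper bounds $\max_i|g_i(\mathbf{x}_k)-\lambda_\ast|$ by the $1$-norm of the full ratio vector before applying the Lipschitz estimate, whereas you work directly with the componentwise maximum; your route is marginally sharper but the idea is identical.
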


\begin{proof}
Since  $\lambda_{k}\in [\underline{\lambda }_{k}, \overline{\lambda }_{k}]$, we have
\begin{eqnarray*}
\left |{\lambda }_{k}-\lambda_{\ast}\right | &\le&\max \left | \frac{%
\mathcal{A} \mathbf{x}_{k}^{[m-1]}}{\mathbf{x}_{k}}-\frac{\mathcal{A}
\mathbf{x}_{\ast}^{[m-1]}}{\mathbf{x}_{\ast}}\right | \le \left \|
\frac{\mathcal{A} \mathbf{x}_{k}^{[m-1]}}{\mathbf{x}_{k}}-\frac{\mathcal{A}
\mathbf{x}_{\ast}^{[m-1]}}{\mathbf{x}_{\ast}} \right \|_1.
\end{eqnarray*}
Since the Fr\'echet derivative of $\frac{\mathcal{A}\mathbf{x}^{[m-1]}}{%
\mathbf{x}}$ is continuous in a neighborhood of $\mathbf{x}_{\ast}$, we have
$\left |{\lambda }_{k}-\lambda_{\ast}\right | \le c_2 \|\mathbf{x}%
_{k}-\mathbf{x}_{\ast }\|_1$ for a constant $c_2>0$.
\end{proof}

We now prove the local quadratic convergence of Algorithm \ref{alg:Nini}. 
We assume $m\ge 3$ since the result holds for $m=2$ by \cite{LGL17}. 

\begin{theorem}
\label{quadratic} 
 Let $\left( \mathbf{x}_{\ast }, \lambda_{\ast}\right)$ be
a positive  eigenpair of   a  nonnegative tensor  $\mathcal{A}$, with $\mathbf{J}\mathbf{f}(\mathbf{x}_{\ast}, \lambda_{\ast})$ in  \eqref{eq: graFs} being nonsingular, and let  $\left\{ (\mathbf{x}_{k}, {\lambda }_{k})\right\}$
be generated by Algorithm \ref{alg:Nini}. Suppose that $(%
\mathbf{x}_{k_0}, {\lambda}_{k_0})$ is sufficiently close to $\left( \mathbf{x}_{\ast }, \lambda_{\ast}\right)$ for some $k_0\ge 0$. 
Then $\mathbf{x}_{k}$ converges
to $\mathbf{x}_{\ast}$ quadratically and ${\lambda }_{k}$ converges
to $\lambda_{\ast}$ quadratically.
\end{theorem}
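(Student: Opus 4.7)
The plan is to show that, once $(\bx_k,\lambda_k)$ is sufficiently close to the positive eigenpair, the MNI step coincides exactly with the classical Newton step applied to $\bff$; then Lemma~\ref{newton} delivers the quadratic contraction for $\bx_k$, and Lemma~\ref{relation} promotes it to quadratic convergence for $\lambda_k$. Two algorithmic deviations must be neutralized: the post-processing~\eqref{wupdate} of $\widehat{\bw}_k$, and the $1$-norm renormalization of $\widetilde{\bx}_{k+1}$ together with the interval choice of $\lambda_{k+1}$ instead of the Newton update $\widehat{\lambda}_{k+1}$.

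The first step is to check that the intervention in~\eqref{wupdate} is inactive in a neighborhood of $(\bx_*,\lambda_*)$. By the identity $(\lambda_* I-(m-1)T(\bx_*))\bx_* = (2-m)\lambda_*\bx_*$ from the Remark following Lemma~\ref{newton}, continuity of the inverse yields $\widehat{\bw}_k = (\lambda_k I-(m-1)T(\bx_k))^{-1}\bx_k \to \frac{1}{(2-m)\lambda_*}\bx_* < \zero$, so for $k$ large all entries of $\widehat{\bw}_k$ are strictly negative. Hence $\bw_k = \widehat{\bw}_k$ and, since $\be^T\bx_k=1$, the vector $\widetilde{\bx}_{k+1}=(m-2)\bx_k + \bw_k/(\be^T \bw_k)$ is positive with $\|\widetilde{\bx}_{k+1}\|_1 = \be^T\widetilde{\bx}_{k+1} = m-1$. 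Therefore $\bx_{k+1} = \widetilde{\bx}_{k+1}/(m-1)$ coincides exactly with the Newton iterate $\widehat{\bx}_{k+1}$ of~\eqref{eq:newtonup}.

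With this identification, Lemma~\ref{newton} applied to the starting pair $(\bx_k,\lambda_k)$ gives $\|\widehat{\bx}_{k+1}-\bx_*\|_1 \le c_1\|(\bx_k,\lambda_k)-(\bx_*,\lambda_*)\|_1^2$, while Lemma~\ref{relation} supplies $\|(\bx_k,\lambda_k)-(\bx_*,\lambda_*)\|_1 \le (1+c_2)\|\bx_k-\bx_*\|_1$. Combining these, $\|\bx_{k+1}-\bx_*\|_1 \le c_1(1+c_2)^2 \|\bx_k-\bx_*\|_1^2$, a quadratic contraction in $\bx$. A second application of Lemma~\ref{relation} at step $k+1$ transports the same quadratic estimate to $|\lambda_{k+1}-\lambda_*|$. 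A routine induction — using that $c_1(1+c_2)^2\|\bx_k-\bx_*\|_1<1$ once $k_0$ is chosen large enough — preserves the closeness hypothesis for all $k\ge k_0$ and closes the argument.

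The main obstacle is making the first step rigorous: one needs $(\lambda_k I-(m-1)T(\bx_k))^{-1}$ to exist and be uniformly bounded on a neighborhood of $(\bx_*,\lambda_*)$, so that $\widehat{\bw}_k$ does converge to the negative limit above. This is automatic when $\lambda_* I-(m-1)T(\bx_*)$ is nonsingular, which, as the Remark observes, is only slightly stronger than the standing hypothesis that $\mathbf{Jf}(\bx_*,\lambda_*)$ is nonsingular. In the borderline situation one must exploit the freedom in line~4 of Algorithm~\ref{alg:Nini} to keep $\lambda_k$ strictly inside $[\underline{\lambda}_k,\overline{\lambda}_k]$ and verify that a legal choice persists in the limit. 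Everything else — notably the fact that selecting $\lambda_{k+1}$ anywhere in the shrinking interval still yields a quadratic bound — is handled cleanly by Lemma~\ref{relation}.
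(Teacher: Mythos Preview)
Your argument is correct and in one respect simpler than the paper's. The key observation you make---that once $\widehat{\bw}_k<\zero$ and $\be^T\bx_k=1$, the normalized iterate $\bx_{k+1}=\widetilde{\bx}_{k+1}/\|\widetilde{\bx}_{k+1}\|_1$ coincides \emph{exactly} with the Newton iterate $\widehat{\bx}_{k+1}$, because the second row of \eqref{eq:step1} forces $\be^T\widehat{\bx}_{k+1}=1$ and $\widehat{\bx}_{k+1}>0$---is not exploited in the paper. The paper instead treats the normalization as a genuine perturbation: it bounds $\|\bx_{k+1}-\widehat{\bx}_{k+1}\|_1=\bigl|\,\|\widehat{\bx}_{k+1}\|_1-1\,\bigr|$ via an additional Lipschitz constant $c_3$ coming from \eqref{eq2.3}, and ends with the contraction constant $(1+c_3)c_1(1+c_2)^2$ where your route gives simply $c_1(1+c_2)^2$. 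Your shortcut buys a cleaner proof and a sharper constant; the paper's detour buys nothing here, since the quantity it estimates is in fact zero.

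On the obstacle you raise in your final paragraph, the paper actually sidesteps it. Rather than invoking continuity of $(\lambda I-(m-1)T(\bx))^{-1}$ at the limit, the Section~\ref{sec3} derivation expresses $\be^T\widehat{\bw}_k$ and $\widehat{\bw}_k/(\be^T\widehat{\bw}_k)$ directly through the Newton correction $(\bd_k,\delta_k)$ via \eqref{eqdelta2}--\eqref{eqdk}; since $(\bd_k,\delta_k)$ is well-defined and tends to zero whenever the \emph{full} Jacobian $\mathbf{Jf}(\bx_*,\lambda_*)$ is nonsingular, one reads off $\widehat{\bw}_k\to\frac{1}{(2-m)\lambda_*}\bx_*<\zero$ without ever assuming that $\lambda_* I-(m-1)T(\bx_*)$ is invertible. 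If you replace your inverse-continuity step with this argument, your proof goes through under exactly the hypothesis stated in the theorem.
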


\begin{proof}
For some $\eta\in (0, \min \mathbf{x}_{\ast})$, there are positive constants $c_1$, $%
c_2 $ and $c_3$ such that
\begin{equation}  \label{eq2.1}
\left\Vert \left[
\begin{array}{c}
\widehat{\mathbf{x}}_{k+1} \\
\widehat{\lambda }_{k+1}%
\end{array}%
\right] -\left[
\begin{array}{c}
\mathbf{x}_{\ast } \\
\lambda_{\ast}%
\end{array}%
\right] \right\Vert_1 \leq c_1 \left\Vert \left[
\begin{array}{c}
\mathbf{x}_{k} \\
{\lambda }_{k}%
\end{array}%
\right] -\left[
\begin{array}{c}
\mathbf{x}_{\ast } \\
\lambda_{\ast}%
\end{array}%
\right] \right\Vert_1^{2}
\end{equation}
whenever $\left\Vert \left[
\begin{array}{c}
{\mathbf{x}}_{k} \\
{\lambda }_{k}%
\end{array}%
\right] -\left[
\begin{array}{c}
\mathbf{x}_{\ast } \\
\lambda_{\ast}%
\end{array}%
\right] \right\Vert_1 <\eta$ (by Lemma \ref{newton}),
\begin{equation}
\left |{\lambda }_{k}-\lambda_{\ast}\right | \le c_2 \|\mathbf{x}%
_{k}-\mathbf{x}_{\ast }\|_1  \label{eq2.2}
\end{equation}
whenever $\left\Vert {\mathbf{x}}_{k} -\mathbf{x}_{\ast } \right\Vert <\eta$ 
(by Lemma \ref{relation}), and
\begin{equation}
\left \| \left[
\begin{array}{c}
\mathbf{r}(\widehat{\mathbf{x}}_{k+1},\widehat{\lambda }_{k+1}) \\
{\mathbf{e}}^{T}\widehat{\mathbf{x}}_{k+1}-1%
\end{array}%
\right] - \left[
\begin{array}{c}
\mathbf{r}({\mathbf{x}}_{\ast},{\lambda }_{\ast}) \\
{\mathbf{e}}^{T}{\mathbf{x}}_{\ast}-1%
\end{array}%
\right] \right \|_1 \le c_3 \left\Vert \left[
\begin{array}{c}
\widehat{\mathbf{x}}_{k+1} \\
\widehat{\lambda }_{k+1}%
\end{array}%
\right] -\left[
\begin{array}{c}
\mathbf{x}_{\ast } \\
\lambda_{\ast}%
\end{array}%
\right] \right\Vert_1  \label{eq2.3}
\end{equation}
whenever $\left\Vert \left[
\begin{array}{c}
\widehat{\mathbf{x}}_{k+1} \\
\widehat{\lambda }_{k+1}%
\end{array}%
\right] -\left[
\begin{array}{c}
\mathbf{x}_{\ast } \\
\lambda_{\ast}%
\end{array}%
\right] \right\Vert_1 <\eta$ 
(since the Fr\'echet derivative of $\left[
\begin{array}{c}
\mathbf{r}({\mathbf{x}},{\lambda }) \\
{\mathbf{e}}^{T}{\mathbf{x}}-1%
\end{array}%
\right] $ is continuous). By the discussions leading to Algorithm \ref{alg:Nini}, we may also assume that $\widehat{\mathbf{w}}_k<0$ in line 5 of Algorithm \ref{alg:Nini} and thus ${\mathbf{w}}_k$ in line 7 of Algorithm \ref{alg:Nini} is still $\widehat{\mathbf{w}}_k$, whenever $\left\Vert \left[
\begin{array}{c}
{\mathbf{x}}_{k} \\
{\lambda }_{k}%
\end{array}%
\right] -\left[
\begin{array}{c}
\mathbf{x}_{\ast } \\
\lambda_{\ast}%
\end{array}%
\right] \right\Vert_1 <\eta$. 

 When $\left\Vert \left[
\begin{array}{c}
{\mathbf{x}}_{k} \\
{\lambda }_{k}%
\end{array}%
\right] -\left[
\begin{array}{c}
\mathbf{x}_{\ast } \\
\lambda_{\ast}%
\end{array}%
\right] \right\Vert_1 <\eta$, we have $\mathbf{x}_k>0$ and then $\widehat{\mathbf{x}}_{k+1}>0$ by \eqref{eq:newtonup}. 

Now we take
$$
\epsilon=\min \left (\eta,\  \sqrt{\frac{\eta}{c_1}}, \ \frac{1}{(1+c_3)c_1(1+c_2)^3} \right )
$$
and assume that $\left\Vert \left[
\begin{array}{c}
{\mathbf{x}}_{k} \\
{\lambda }_{k}%
\end{array}%
\right] -\left[
\begin{array}{c}
\mathbf{x}_{\ast } \\
\lambda_{\ast}%
\end{array}%
\right] \right\Vert_1 <\epsilon$ for $k=k_0$. 

By \eqref{eq2.2} we have 
\begin{equation}\label{eq99}
\left\Vert \left[
\begin{array}{c}
\mathbf{x}_{k} \\
{\lambda }_{k}%
\end{array}%
\right] -\left[
\begin{array}{c}
\mathbf{x}_{\ast } \\
\lambda_{\ast}%
\end{array}%
\right] \right\Vert_1=\|\mathbf{x}_k-\mathbf{x}_{\ast}\|_1+
|\lambda_k-\lambda_{\ast}|\le (1+c_2)\|{%
\mathbf{x}}_{k} - \mathbf{x}_{\ast } \|_1. 
\end{equation}
Then by \eqref{eq2.1}
\begin{equation*}
\|\widehat{\mathbf{x}}_{k+1} - \mathbf{x}_{\ast } \|_1 
\le c_1(1+c_2)^2\|{%
\mathbf{x}}_{k} - \mathbf{x}_{\ast } \|_1^2 , 
\end{equation*}
and also
$$
\left\Vert \left[
\begin{array}{c}
\widehat{\mathbf{x}}_{k+1} \\
\widehat{\lambda }_{k+1}%
\end{array}%
\right] -\left[
\begin{array}{c}
\mathbf{x}_{\ast } \\
\lambda_{\ast}%
\end{array}%
\right] \right\Vert_1 < c_1 \epsilon^2\leq \eta. 
$$
Then by \eqref{eq2.3}, \eqref{eq2.1} and \eqref{eq99}
\begin{equation*}
\left |\|\widehat{\mathbf{x}}_{k+1}\|_1 -1 \right | = \left |{\mathbf{e}}^{T}%
\widehat{\mathbf{x}}_{k+1} -1 \right | \le c_3c_1(1+c_2)^2\|{\mathbf{x}}_{k} -
\mathbf{x}_{\ast } \|_1^2 . 
\end{equation*}
Note that 
$$
{\mathbf{x}}_{k+1} =\frac{ \widetilde{\mathbf{x}}_{k+1}}{\|\widetilde{%
\mathbf{x}}_{k+1} \|_1}= \frac{ \widehat{\mathbf{x}}_{k+1}}{\|\widehat{%
\mathbf{x}}_{k+1}  \|_1}. 
$$
Then 
\begin{eqnarray*}
\|{\mathbf{x}}_{k+1} - \mathbf{x}_{\ast } \|_1&=&\| {\mathbf{x}}_{k+1} - 
\widehat{\mathbf{x}}_{k+1} + \widehat{\mathbf{x}}_{k+1} - \mathbf{x}_{\ast }
\|_1\\
&\le &\| {\mathbf{x}}_{k+1} - \widehat{\mathbf{x}}_{k+1}\|_1+\| \widehat{%
\mathbf{x}}_{k+1} - \mathbf{x}_{\ast } \|_1 \\
&=& \| \left ({\mathbf{x}}_{k+1} - \|\widehat{\mathbf{x}}_{k+1}\|_1{\mathbf{x%
}}_{k+1}\right ) \|_1+\| \widehat{\mathbf{x}}_{k+1} - \mathbf{x}_{\ast } \|_1
\\
&=&\left | \|\widehat{\mathbf{x}}_{k+1}\|_1-1\right | +\| \widehat{\mathbf{x}%
}_{k+1} - \mathbf{x}_{\ast } \|_1 \\
&\le & \left (1+c_3\right ) c_1(1+c_2)^2\|{\mathbf{x}}_{k} -
\mathbf{x}_{\ast } \|_1^2.
\end{eqnarray*}
By the choice of $\epsilon$ we have  $(1+c_3) c_1(1+c_2)^2 \epsilon \le
\frac{1}{1+c_2}$ and thus  $\|{\mathbf{x}}_{k+1} - \mathbf{x}_{\ast } \|_1 <(1+c_3) c_1(1+c_2)^2\epsilon^2\le 
\frac{1}{1+c_2}\epsilon < \eta$. Then  $\left |{{\lambda }}%
_{k+1}-\lambda_{\ast}\right | \le c_2 \|\mathbf{x}_{k+1}-\mathbf{x}_{\ast
}\|_1 < \frac{c_2}{1+c_2}\epsilon$. Therefore, $\left\Vert \left[
\begin{array}{c}
{\mathbf{x}}_{k+1} \\
{\lambda }_{k+1}%
\end{array}%
\right] -\left[
\begin{array}{c}
\mathbf{x}_{\ast } \\
\lambda_{\ast}%
\end{array}%
\right] \right\Vert_1 = \|{\mathbf{x}}_{k+1} - \mathbf{x}_{\ast } \|_1+
\left |{\lambda }_{k+1}-\lambda_{\ast}\right | < \epsilon$. We can then
repeat the above process to get $\|{\mathbf{x}}_{k+1} - \mathbf{x}_{\ast }
\|_1\le d |{\mathbf{x}}_{k} - \mathbf{x}_{\ast } \|_1^2$ for all $k\ge k_0$
and $d = \left (1+c_3\right ) c_1(1+c_2)^2$. Thus $\mathbf{x}_{k}$
converges to $\mathbf{x}_{\ast}$ quadratically and then ${\lambda }%
_{k}$ converges to $\lambda_{\ast}$ quadratically by \eqref{eq2.2}.
\end{proof}

\section{Application to transition probability tensors} 
\label{sec5}

In Algorithm \ref{alg:Nini}, we need to solve nonsingular 
linear systems of the form 
\begin{equation}
\left( \sigma I-(m-1)T(\mathbf{x})\right) \mathbf{w=x.}
\label{eq: linearsys3}
\end{equation}%
We assume $m\ge 3$. 
Suppose that $\left( \mathbf{x}, {\lambda }\right) $  is sufficiently close to a positive
eigenpair $\left( \mathbf{x}_{\ast }, \lambda_{\ast}\right)$ of  $\mathcal{A}$ and that 
the matrix in \eqref{eq: graFs} is nonsingular. Then we already know that $\mathbf{w}<0$ for the linear system, from 
the discussions leading to Algorithm \ref{alg:Nini}. 

In this section we will explain that, for transition probability tensors, 
 it is likely (but not guaranteed) that we always have $\widehat{\mathbf{w}}_k>0$ or $\widehat{\mathbf{w}}_k<0$ 
during the iteration, starting with $\mathbf{x}_0$ not necessarily close to $\mathbf{x}_{\ast}$. 

We start with the following result.

\begin{lemma}\label{negative}
Let $B$ be an $n\times n$  irreducible nonnegative matrix. If $\sigma<\rho(B)$ is sufficiently close to $\rho(B)$, 
then $(\sigma I-B)^{-1}<0$. 
\end{lemma}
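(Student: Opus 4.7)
The plan is to use the spectral resolution of $B$ near its Perron eigenvalue. By the Perron--Frobenius theorem for irreducible nonnegative matrices, $\rho(B)$ is a simple eigenvalue with a positive right eigenvector $\mathbf{u}>0$ and a positive left eigenvector $\mathbf{v}^T>0$, which we normalize by $\mathbf{v}^T\mathbf{u}=1$. The associated spectral projection is $P=\mathbf{u}\mathbf{v}^T$, and because $P$ is an outer product of two positive vectors, $P>0$ entrywise.

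Next I would write the Laurent expansion of the resolvent about $z=\rho(B)$. Since $\rho(B)$ is a simple pole of $(zI-B)^{-1}$ with residue $P$, there is a matrix-valued function $R(z)$, analytic and hence bounded in a neighborhood of $\rho(B)$, such that
\begin{equation*}
(zI-B)^{-1}=\frac{P}{z-\rho(B)}+R(z)
\end{equation*}
for all $z$ near $\rho(B)$ with $z\ne \rho(B)$. Setting $z=\sigma<\rho(B)$ gives
\begin{equation*}
(\sigma I-B)^{-1}=-\frac{1}{\rho(B)-\sigma}\,P+R(\sigma).
\end{equation*}

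Now I would finish by a dominance argument. Let $p_{\min}=\min_{i,j}(P)_{ij}>0$ and let $M$ be a bound on the entries of $\|R(\sigma)\|_\infty$ (entrywise) for $\sigma$ in a closed interval around $\rho(B)$. Then each entry of $(\sigma I-B)^{-1}$ is at most
\begin{equation*}
-\frac{p_{\min}}{\rho(B)-\sigma}+M,
\end{equation*}
which is strictly negative whenever $0<\rho(B)-\sigma<p_{\min}/M$. Hence $(\sigma I-B)^{-1}<0$ entrywise for all $\sigma<\rho(B)$ sufficiently close to $\rho(B)$, completing the proof.

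The only real obstacle is justifying the Laurent expansion with residue exactly $P=\mathbf{u}\mathbf{v}^T>0$; this rests on $\rho(B)$ being a simple eigenvalue (Perron--Frobenius) together with standard resolvent calculus, and once it is in hand the rest is a one-line dominance argument.
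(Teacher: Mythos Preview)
Your proof is correct and follows essentially the same approach as the paper: isolate the rank-one term $(\sigma-\rho(B))^{-1}\mathbf{u}\mathbf{v}^T$ (with $\mathbf{u},\mathbf{v}>0$ the Perron right and left eigenvectors) from a bounded remainder, then let the singular term dominate as $\sigma\to\rho(B)^-$. The only cosmetic difference is that the paper obtains this decomposition concretely via the Jordan canonical form, writing $(\sigma I-B)^{-1}=(\sigma-\rho(B))^{-1}\mathbf{u}\mathbf{v}^T+U(\sigma I-J)^{-1}V^T$, whereas you invoke the Laurent expansion of the resolvent at a simple pole; the bounded remainder $R(\sigma)$ in your argument is exactly $U(\sigma I-J)^{-1}V^T$ in theirs.
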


\begin{proof}
By Perron--Frobenius theorem \cite{BPl94,V00}, $\rho(B)$ is a simple eigenvalue of $B$ with a positive unit eigenvector $\mathbf{u}$. 
Let 
$$
P^{-1}BP=\left[
\begin{array}{cc}
\rho (B) & 0 \\
0 & J
\end{array}\right]
$$
be the Jordan canonical form of $B$, where $P=\left [
\begin{array}{cc}
\mathbf{u} & U%
\end{array}%
\right]$ and $J$ is the direct sum of the Jordan blocks corresponding to eigenvalues other than $\rho(B)$. 

Let $P^{-1}=\left[
\begin{array}{cc}
\mathbf{v} & V%
\end{array}%
\right] ^{T}$. Then 
\begin{equation*}
\left[
\begin{array}{c}
\mathbf{v}^{T} \\
V^{T}%
\end{array}%
\right] B =\left[
\begin{array}{cc}
\rho (B) & 0 \\
0 & J
\end{array}\right]
\left[
\begin{array}{c}
\mathbf{v}^{T} \\
V^{T}%
\end{array}%
\right].
\end{equation*}%
Thus $\mathbf{v}$ is a left eigenvector of $B$ corresponding to $\rho(B)$. We have $\mathbf{v}>0$ since $\mathbf{v}^T\mathbf{u}=1$ by 
$P^{-1}P=I$.
Now 
$$
\sigma I-B=P\left[
\begin{array}{cc}
\sigma-\rho (B) & 0 \\
0 & \sigma I-J
\end{array}\right]P^{-1}, 
$$
and, when $\sigma$ is not an eigenvalue of $B$, 
\begin{eqnarray*}
(\sigma I-B)^{-1}&=&P\left[
\begin{array}{cc}
\sigma-\rho (B) & 0 \\
0 & \sigma I-J
\end{array}\right]^{-1}P^{-1}\\
&=&\left [
\begin{array}{cc}
\mathbf{u} & U%
\end{array}%
\right]  \left[
\begin{array}{cc}
(\sigma-\rho (B))^{-1} & 0 \\
0 & (\sigma I-J)^{-1}
\end{array}\right]\left[
\begin{array}{c}
\mathbf{v}^{T} \\
V^{T}%
\end{array}%
\right] \\
&=&(\sigma-\rho (B))^{-1}\mathbf{u}\mathbf{v}^{T} +U   (\sigma I-J)^{-1}   {V}^{T}. 
\end{eqnarray*}
It follows that  $(\sigma I-B)^{-1}<0$
when $\sigma<\rho(B)$ is sufficiently close to $\rho(B)$. 
\end{proof}

We now examine the sign pattern of the solution $\mathbf{w}$ of the linear system \eqref{eq: linearsys3}.

\begin{proposition}
\label{prop1} Let $\mathcal{A}$ be a transition probability tensor. Given a vector $\mathbf{x}>0$ with $\left\Vert \mathbf{x}\right\Vert _{1}=1$ and
consider the linear system \eqref{eq: linearsys3}.
Then

\begin{enumerate}
\item If $\sigma >m-1$ then $\mathbf{w}>0.$
\item If $\sigma <m-1$ then  $\mathbf{e}^{T}\mathbf{w}<0$ 
(so $\mathbf{w}$ has at least one negative components). 
\item If $\sigma <m-1$ is sufficiently close to $m-1$ and  $\mathcal{A}$ is weakly  irreducible, then  $\mathbf{w}<0$. 
\end{enumerate}
\end{proposition}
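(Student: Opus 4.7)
The plan is to exploit the stochastic structure of $T(\mathbf{x})$ (from Lemma \ref{leftev}) together with two sign-preservation facts: the M-matrix characterization recalled in Section \ref{sec2} for part 1, and Lemma \ref{negative} for part 3. Part 2 reduces to a single scalar identity obtained by left-multiplying the linear system by $\mathbf{e}^T$.

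For part 1, I would observe that $(m-1)T(\mathbf{x})\ge 0$ with spectral radius $m-1$ (since $T(\mathbf{x})$ is column stochastic by Lemma \ref{leftev}, so $\rho(T(\mathbf{x}))=1$). Hence when $\sigma>m-1$, the matrix $\sigma I-(m-1)T(\mathbf{x})$ is a nonsingular $M$-matrix, so $\bigl(\sigma I-(m-1)T(\mathbf{x})\bigr)^{-1}\ge 0$ with positive diagonal (from the Neumann series $\sigma^{-1}\sum_k (\sigma^{-1}(m-1)T(\mathbf{x}))^k$). Applying this to the strictly positive right-hand side $\mathbf{x}$ gives $\mathbf{w}>0$.

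For part 2, I would multiply \eqref{eq: linearsys3} on the left by $\mathbf{e}^T$. Using $\mathbf{e}^T T(\mathbf{x})=\mathbf{e}^T$ (Lemma \ref{leftev}) and $\mathbf{e}^T\mathbf{x}=1$, this collapses to
\begin{equation*}
(\sigma-(m-1))\,\mathbf{e}^T\mathbf{w}=1,
\end{equation*}
so $\mathbf{e}^T\mathbf{w}=1/(\sigma-m+1)<0$ when $\sigma<m-1$. Since $\mathbf{e}^T\mathbf{w}<0$, $\mathbf{w}$ must have at least one negative entry.

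For part 3, under the extra assumption that $\mathcal{A}$ is weakly irreducible and $\mathbf{x}>0$, Lemma \ref{irreducible} tells us that $T(\mathbf{x})$ is irreducible nonnegative. Then $B:=(m-1)T(\mathbf{x})$ is irreducible nonnegative with $\rho(B)=m-1$, so Lemma \ref{negative} applies: for $\sigma<m-1$ sufficiently close to $m-1$, $\bigl(\sigma I-B\bigr)^{-1}<0$ entrywise. Since $\mathbf{x}>0$, we conclude $\mathbf{w}=(\sigma I-B)^{-1}\mathbf{x}<0$. The only subtle point is matching the hypotheses of Lemma \ref{negative} (strict negativity requires $\sigma$ close enough to $\rho(B)$), so I would state explicitly that ``sufficiently close to $m-1$'' here means close enough in the sense provided by Lemma \ref{negative}. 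No serious obstacle is expected; the main care is distinguishing between the nonnegativity arising from the $M$-matrix argument in part 1 and the strict negativity requiring the Perron projector analysis of Lemma \ref{negative} in part 3.
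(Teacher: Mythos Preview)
Your proposal is correct and follows essentially the same argument as the paper: use Lemma~\ref{leftev} to get $\rho(T(\mathbf{x}))=1$ and the $M$-matrix property for part~1, left-multiply by $\mathbf{e}^T$ to obtain $(\sigma-(m-1))\mathbf{e}^T\mathbf{w}=1$ for part~2, and combine Lemma~\ref{irreducible} with Lemma~\ref{negative} for part~3. The only extra detail you add is the Neumann-series remark ensuring strict positivity in part~1, which is harmless and not needed since $(\sigma I-(m-1)T(\mathbf{x}))^{-1}\ge 0$ applied to $\mathbf{x}>0$ already gives $\mathbf{w}>0$ from the $M$-matrix characterization stated in Section~\ref{sec2}.
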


\begin{proof}
We have $\rho(T(\mathbf{x}))=1$ by Lemma \ref{leftev}. 
If $\sigma >m-1$, then $\sigma I-(m-1)T(\mathbf{x})$ 
 is a nonsingular $M$-matrix, and thus $\mathbf{w}=\left( \sigma I-(m-1)T(\mathbf{x})\right)^{-1}\mathbf{x} >0.$ 

By Lemma \ref{leftev} we also have 
\begin{equation*}
\mathbf{e}^{T}\mathbf{x}
=\mathbf{e%
}^{T}\left( \sigma I-(m-1)T(\mathbf{x})\right)\mathbf{w}=\left( \sigma
-(m-1)\right)\mathbf{e}^{T}\mathbf{w}.  
\end{equation*}%
If $\sigma<m-1$, then $\mathbf{e}^{T}\mathbf{w}<0$.

When ${\mathcal A}$ is weakly  irreducible, $T(\mathbf{x})$ is an irreducible nonnegative matrix by Lemma \ref{irreducible}. If  $\sigma <m-1$ is sufficiently close to $m-1$, then $\left( \sigma I-(m-1)T(\mathbf{x})\right)^{-1}<0$ by Lemma \ref{negative}
and thus $\mathbf{w}=\left( \sigma I-(m-1)T(\mathbf{x})\right)^{-1}\mathbf{x} <0$. 
\end{proof}

When Algorithm \ref{alg:Nini} is applied to a  transition probability tensor, we have $\lambda_k\ne m-1$ in line 4 of the algorithm. 
The algorithm typically requires a small number of iterations for convergence to a positive eigenpair $\left( \mathbf{x}_{\ast }, \lambda_{\ast}\right)$. Note that we always have $\lambda_{\ast}=1$ for a  transition probability tensor.
Suppose that $\left( \mathbf{x_k}, {\lambda_k}\right) $ in Algorithm \ref{alg:Nini}  is sufficiently close to $\left( \mathbf{x}_{\ast }, \lambda_{\ast}\right)$  and that 
the matrix in \eqref{eq: graFs} is nonsingular. Then we already know that $\widehat{\mathbf{w}}_k<0$ in line 5 of the  algorithm.
Now Proposition \ref{prop1} tells us that  $\widehat{\mathbf{w}}_k>0$ if $\lambda_k>m-1$ and that $\widehat{\mathbf{w}}_k<0$ if $\lambda_k<m-1$ is close to $m-1$. Recall that $\overline{\lambda}_k\ge 1$ by Lemma \ref{minmax} and that we take 
$\lambda_k$ to be equal to $\overline{\lambda}_k$ or (to avoid singularity) to be sightly smaller than $\overline{\lambda}_k$. 
When $m=3$ for example, that $\lambda_k<m-1$ indicates that $\mathbf{x}_k$ is already not too far away from a positive eigenvector. We then have a good chance of having $\widehat{\mathbf{w}}_k<0$ when $\lambda_k<m-1$. 
In this case, the intervention in the first two cases of \eqref{wupdate} is applied only occasionally. 

\section{Numerical experiments}
\label{sec6}

In this section we present some numerical results to show the usefulness of MNI. 
To accommodate the computation of eigenvectors with some zero components, we modify the computation of 
$\overline{\lambda}_k$ and $\underline{\lambda}_k$ as follows: 
$$
\overline{\lambda}_k = \max \left( \frac{(\mathcal{A}\mathbf{x}_{k}^{m-1})_{i}}{(\mathbf{x}_{k})_{i}} \ | \ (\mathbf{x}_{k})_{i}\geq 10^{-13} \right), \quad 
\underline{\lambda}_k = \min \left( \frac{(\mathcal{A}\mathbf{x}_{k}^{m-1})_{i}}{(\mathbf{x}_{k})_{i}}\ | \ (\mathbf{x}_{k})_{i}\geq 10^{-13} \right). 
$$
This will not cause any problem to MNI since in MNI we choose $\lambda_k$ from the true interval $[\underline{\lambda}_k, \overline{\lambda}_k]$, which contains the computed interval $[\underline{\lambda}_k, \overline{\lambda}_k]$ (with the above 
modification). 
So we can choose any $\lambda_k$ from the computed interval $[\underline{\lambda}_k, \overline{\lambda}_k]$ to ensure that 
the linear system in step 4 of MNI is not (nearly) singular. In our experiments, $\lambda_k=\overline{\lambda}_k$ always works. 
The default initial vector for MNI is $\mathbf{x}_{0}=\mathbf{e}/n$. But to find different nonnegative eigenpairs, we 
run MNI  a number of times using $%
\mathbf{x}_{0}=\mathbf{y}_0/\|\mathbf{y}_0\|_1$ with $\mathbf{y}_0={\rm rand(n,1)}$ in MATLAB.
We terminate the iteration when the residual is small enough:  $\left\Vert \mathcal{A}\mathbf{x}_{k}^{m-1} - \overline{\lambda }_{k}\mathbf{x}_{k} \right\Vert < 10^{-13}$.

\begin{example}\label{ex1}
(Example 2.7 of \cite{CPZ13}) 
Let $\mathcal{A}\in \mathbb{R}_{+}^{[4,2]}$ be
defined by
\begin{align*}
A_{1111}&=A_{2222}=\frac{4}{\sqrt{3}},\ \ A%
_{1112}=A_{1121}=A_{1211}=A_{2111}=1, \\
A_{1222}&=A_{2122}=A_{2212}=A%
_{2221}=1, \text{ and }A_{ijkl}=0 \text{ elsewhere}.
\end{align*}
\end{example}
The tensor is irreducible and has three positive $Z$-eigenpairs:
\begin{align*}
 (\mathbf{x}^{(1)}, \lambda^{(1)})&=\left (\left [ \frac{1}{2}, \frac{1}{2}\right ]^T,   1+\frac{1}{\sqrt{3}}  \right ),   \\
(\mathbf{x}^{(2)}, \lambda^{(2)})&=\left (\left [ \frac{\sqrt{3}}{1+\sqrt{3}}, \frac{1}{1+\sqrt{3}}\right ]^T,   \frac{11}{3+2 \sqrt{3}}  \right ),    \\
(\mathbf{x}^{(3)}, \lambda^{(3)})&=\left (\left [ \frac{1}{1+\sqrt{3}}, \frac{\sqrt{3}}{1+\sqrt{3}}\right ]^T,   \frac{11}{3+2 \sqrt{3}}  \right ).
\end{align*}
Note that we have converted the $Z_2$-eigenpairs reported in \cite{CPZ13} to $Z_1$-eigenpairs here. 

For this example, we generate $5000$  random vectors $\mathbf{y}_0$, normalize them to $\mathbf{x}_0$, and apply MNI. 
Each time, the sequence $(\mathbf{x}_k, \lambda_k)$ from the algorithm converges to one of the three eigenpairs. 
In Table \ref{table1}, \textquotedblleft Occurrence\textquotedblright denotes the number of occurrences with convergence to a particular eigenpair. For each eigenpair, \textquotedblleft A-Sign\textquotedblright denotes the average number of times with $s_k=(\max \widehat{\mathbf{w}}_{k})( \min \widehat{\mathbf{w}}_{k})<0$ (This tells us how often the intervention in the first two cases of \eqref{wupdate} is needed), 
\textquotedblleft A-Iter\textquotedblright denotes the average number of iterations to
achieve convergence, \textquotedblleft A-Err\textquotedblright denotes the
average residual error when the iteration is terminated.

From Table \ref{table1}, we can see that, for a random initial vector $\mathbf{x}_0$, MNI would compute one of the positive eigenpairs 
quickly and accurately, with minimal intervention from \eqref{wupdate}. 

\begin{table}[tbp]
\caption{Numerical results for Example \ref{ex1}}
\label{table1}
\begin{center}
\begin{tabular}{l|rrrl}
\hline
$(\mathbf{x}, \lambda)$ & Occurrence & A-Sign & A-Iter & A-Err  \\ \hline
$  (\mathbf{x}^{(1)}, \lambda^{(1)})  $ & 967 & 0 & 4.12 & 5.73e-15  \\
$ (\mathbf{x}^{(2)}, \lambda^{(2)}) $ & 1966 & 0.10 & 7.10  & 6.58e-15 \\
$ (\mathbf{x}^{(3)}, \lambda^{(3)})$  & 2067 & 0.11 & 7.12  & 7.73e-15 \\ \hline
\end{tabular}%
\end{center}
\end{table}

\begin{example}
\label{ex2}
(Example 5.1 of \cite{CPZ13}) 
Let $\mathcal{A}\in \mathbb{R}_{+}^{[4,2]}$ be
defined by
\begin{align*}
A_{1111}&=1.1, \ \ A_{2222}=1.2,\ \ A%
_{1112}=A_{1121}=A_{1211}=A_{2111}=0.25, \\
A_{1222}&=A_{2122}=A_{2212}=A%
_{2221}=0.25, \text{ and }A_{ijkl}=0 \text{ elsewhere}.
\end{align*}
\end{example}
The tensor is irreducible and has three positive $Z$-eigenpairs:
\begin{align*}
 (\mathbf{x}^{(1)}, \lambda^{(1)})&\approx \left (\left [  0.1785, 0.8215        \right ]^T,   0.9216      \right ),   \\
(\mathbf{x}^{(2)}, \lambda^{(2)})&\approx \left (\left [  0.8052, 0.1948 \right ]^T,   0.8331  \right ),    \\
(\mathbf{x}^{(3)}, \lambda^{(3)})&\approx \left (\left [ 0.5193, 0.4807 \right ]^T,   0.5373  \right ).
\end{align*}
Note that we have converted the $Z_2$-eigenpairs reported in \cite{CPZ13} to $Z_1$-eigenpairs here. 

For this example, we again use $5000$  random initial vectors. The numerical results in Table \ref{table2} are similar to those in Table \ref{table1}, but we need the intervention in \eqref{wupdate} more often this time. 

\begin{table}[tbp]
\caption{Numerical results for Example \ref{ex2}}
\label{table2}
\begin{center}
\begin{tabular}{l|rrrl}
\hline
$(\mathbf{x}, \lambda)$ & Occurrence & A-Sign & A-Iter & A-Err  \\ \hline
$  (\mathbf{x}^{(1)}, \lambda^{(1)})  $ & 1288 & 0.37 & 5.79 & 4.20e-15  \\
$ (\mathbf{x}^{(2)}, \lambda^{(2)}) $ & 1245 & 0.35 & 5.77  & 4.96e-15 \\
$ (\mathbf{x}^{(3)}, \lambda^{(3)})$  & 2467 & 0.13 & 4.76  & 5.01e-15 \\ \hline
\end{tabular}%
\end{center}
\end{table}

\begin{example}
\label{ex3}
Consider  $\mathcal{A}\in \mathbb{R}_{+}^{[4,2]}$ defined by
$$
A_{1111}= 1.1,\ \   A_{2222}=1.2,\ \  A_{1112}=A_{1222}=0.25, \text{ and }A_{ijkl}=0 \text{ elsewhere}.
$$
\end{example}
The tensor is not weakly irreducible and has three nonnegative $Z$-eigenpairs, two of them are positive:
\begin{align*}
 (\mathbf{x}^{(1)}, \lambda^{(1)})&\approx \left (\left [  0.1874, 0.8126     \right ]^T,   0.7923      \right ),   \\
(\mathbf{x}^{(2)}, \lambda^{(2)})&= \left (\left [  1, 0 \right ]^T,   1.1  \right ),    \\
(\mathbf{x}^{(3)}, \lambda^{(3)})&\approx \left (\left [0.4412, 0.5588     \right ]^T,  0.3746  \right ).
\end{align*}

For this example, we use $5000$  random initial vectors. From the numerical results in Table \ref{table3}, we can see that 
MNI takes many more iterations to approximate the second eigenpair, which has a zero component in the eigenvector. 
This is not too surprising since the local quadratic convergence of MNI is proved only for approximating positive eigenpairs. 

\begin{table}[tbp]
\caption{Numerical results for Example \ref{ex3}}
\label{table3}
\begin{center}
\begin{tabular}{l|rrrl}
\hline
$(\mathbf{x}, \lambda)$ & Occurrence & A-Sign & A-Iter & A-Err  \\ \hline
$  (\mathbf{x}^{(1)}, \lambda^{(1)})  $ & 1166 & 0.29 & 5.82 & 4.62e-15  \\
$ (\mathbf{x}^{(2)}, \lambda^{(2)}) $ & 876 & 69.17 & 69.17  & 8.27e-14 \\
$ (\mathbf{x}^{(3)}, \lambda^{(3)})$  & 2958 & 0.08 & 4.40  & 4.26e-15 \\ \hline
\end{tabular}%
\end{center}
\end{table}

We now perform some experiments on some transition probability tensors.
\begin{example}(Example 1.7 of \cite{CZ13}) 
\label{ex4}
Consider the transition probability tensor $\mathcal{P}\in \mathbb{R}_{+}^{[4,2]}$  given by
\begin{equation*}
\begin{array}{cccc}
P_{1111}=0.872 & P_{1112}=2.416/3 & P_{1121}=2.416/3 & P_{1122}=0.616/3 \\
P_{1211}=2.416/3 & P_{1212}=0.616/3 & P_{1221}=0.616/3 & P_{1222}=0.072 \\
P_{2111}=0.128 & P_{2112}=0.584/3 & P_{2121}=0.584/3 & P_{2122}=2.384/3 \\
P_{2211}=0.584/3 & P_{2212}=2.384/3 & P_{2221}=2.384/3 & P_{2222}=0.928 \  .
\end{array}%
\end{equation*}
\end{example}
The tensor has two positive $Z$-eigenpairs:
$$
(\mathbf{x}^{(1)}, \lambda^{(1)})= \left (\left [  0.6, 0.4     \right ]^T,   1      \right ),   \quad
(\mathbf{x}^{(2)}, \lambda^{(2)})= \left (\left [ 0.2, 0.8 \right ]^T,   1  \right ).
$$

For this example, we use $5000$  random initial vectors. From the numerical results in Table \ref{table4}, we can see that 
MNI takes  more iterations to approximate the first eigenpair. We then take two different random initial vectors, with MNI convergence to the two eigenpairs, and plot in Figure \ref{fig:MNIEx4} the eigenvector errors
$\|\mathbf{x}_k^{(i)}-\mathbf{x}^{(i)}\|_1, \ i=1, 2$. We see that 
the convergence of MNI is linear for the first eigenvector and is quadratic for the second eigenvector. 
The reason is that the matrix in \eqref{eq: graFs} is singular at the first eigenpair and is nonsingular at the second eigenpair. 
We then compare MNI with the SS-HOPM algorithm with $\alpha=1$ \cite{KM11} in Table \ref{table4}, with the same initial vector and same stopping criterion for each trial. We find that SS-HOPM fails to satisfy the stopping criterion within 10000 iterations 
for approximating the first eigenpair.

\begin{table}[tbp]
\caption{Numerical results for Example \ref{ex4}}
\label{table4}
\begin{center}
\begin{tabular}{l|rrrl}
\hline
 MNI   & Occurrence   & A-Sign & A-Iter & A-Err\\ \hline
$(\mathbf{x}^{(1)}, \lambda^{(1)}) $  & 3620 & 0.13 & 18.54 & 5.14e-14 \\
$(\mathbf{x}^{(2)}, \lambda^{(2)})  $ & 1380 & 0.34 & 5.63  &6.17e-15 \\ \hline \hline
SS-HOPM with $\alpha = 1$&   &   &   &   \\ \hline
$(\mathbf{x}^{(1)}, \lambda^{(1)}) $  & 1764 &  & 10000 & 5.31e-07  \\
$(\mathbf{x}^{(2)}, \lambda^{(2)}) $  & 3236 &  & 392.4  &9.53e-14 \\ \hline
\end{tabular}%
\end{center}
\end{table}

\begin{figure}[hbtp]
\centering
\epsfig{file=./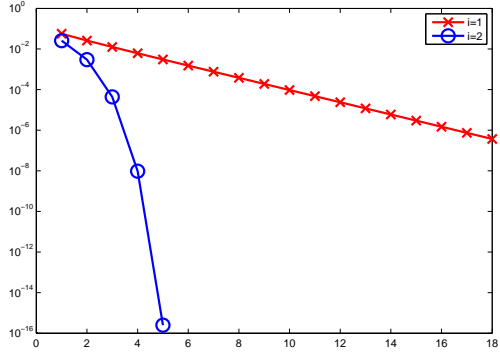,width=.6\textwidth}
\caption{Convergence of $\|\mathbf{x}_k^{(i)}-\mathbf{x}^{(i)}\|_1$  for Example \ref{ex4}.}
\label{fig:MNIEx4}
\end{figure}

\begin{example}(Example 1.5 of \cite{CZ13}) 
\label{ex5}
Consider the transition probability tensor $\mathcal{P}\in \mathbb{R}_{+}^{[3,3]}$  given by
\begin{align*}
P_{111}&=P_{222}=P_{333}=1,\\
P_{122}&=P_{133}=P_{211}=P_{233}=P_{311}=P_{322}=0,  \text{ and }
P_{ijk}=1/3 \text{ elsewhere}.
\end{align*}
\end{example}
The tensor is weakly irreducible and has four nonnegative $Z$-eigenpairs, only one of them is positive:
\begin{align*}
(\mathbf{x}^{(1)}, \lambda^{(1)})&= \left (\left [ 1/3, 1/3, 1/3    \right ]^T,   1      \right ),   \\
(\mathbf{x}^{(2)}, \lambda^{(2)})&= \left (\left [ 1, 0, 0 \right ]^T,   1  \right ), \\
(\mathbf{x}^{(3)}, \lambda^{(3)})&= \left (\left [  0, 1, 0   \right ]^T,   1      \right ),   \\
(\mathbf{x}^{(4)}, \lambda^{(4)})&= \left (\left [ 0, 0, 1\right ]^T,   1  \right ). 
\end{align*}

For this example, we use $5000$  random initial vectors.  Table \ref{table5} shows that MNI computes the positive eigenpair 
$(\mathbf{x}^{(1)}, \lambda^{(1)})$ every time. 
\begin{table}[tbp]
\caption{Numerical results for Example \ref{ex5}}
\label{table5}
\begin{center}
\begin{tabular}{l|rrrl}
\hline
$(\mathbf{x}, \lambda)$ & Occurrence & A-Sign & A-Iter & A-Er \\ \hline
$(\mathbf{x}^{(1)}, \lambda^{(1)})$ & 5000 & 0.0012 & 6.54 & 5.14e-15  \\ \hline
\end{tabular}%
\end{center}
\end{table}

Finally, we consider the application of MNI to a transition probability tensor arising from the study of the multilinear PageRank problem \cite{GLY15}. 
\begin{example}
\label{R63} 
Let ${\bf {\it R}}_{6,3}$ be the matrix given in \cite[p. 1539]{GLY15}. Normalize each column of  ${\bf {\it R}}_{6,3}$ to 
get a column stochastic matrix $[S_1 \ S_2  \ S_3 \ S_4 \ S_5 \ S_6]$, where $S_k\in \mathbb R^{6\times 6}$ for each $k$. 
We consider the transition probability tensor $\mathcal{A}(\alpha)\in \mathbb{R}_{+}^{[3,6]}$  whose entries are given by 
$$
(\mathcal{A}(\alpha))_{ijk}=\alpha (S_k)_{ij}+(1-\alpha)(\mathbf{v})_i, 
$$
where we use $\mathbf{v}=\mathbf{e}/6$. 
\end{example}

We are going to find a nonnegative eigenvector of  $\mathcal{A}(\alpha)$ corresponding to eigenvalue $1$. 
All algorithms tested in \cite{GLY15}, with the default settings,  run into difficulties on this example when $\alpha=0.99$, and it is remarked in \cite{GLY15} that 
this test problem should be a useful case for future algorithmic studies on the multilinear PageRank problem.

The tensor is positive (and thus irreducible) for $0<\alpha<1$ and has a unique positive eigenvector for each $\alpha$ value in Table 
\ref{table6}, other than $\alpha=1$. 
When $\alpha=1$, the tensor is weakly irreducible and has a unique nonnegative eigenvector 
$[0,0,0,1,0,0]^T$. 
\begin{table}[tbph]
\caption{Numerical results for Example~\protect\ref{R63}}
\label{table6}\centering
\begin{tabular}{rlrrl}
\hline
Tensor $\mathcal{A}(\alpha)$   & & & MNI &  \\ 
\cline{1-1}\cline{3-5}
  $\alpha $ & & Sign & Iter & Err \\ \hline 
  $0.1$ &  & $0$ & $4$ & 3.17e-15 \\ 
  $0.3$ &  & $0$ &$5$ & 1.20e-16 \\ 
  $0.5$ &  & $0$ &$6$ & 1.41e-16\\  
  $0.7$ &  & $0$ &$6$ & 1.43e-16 \\  
  $0.9$ &  & $0$ &$7$ & 1.73e-16\\  
  $0.99$ &  & $7$ &$20$ & 4.44e-16\\ 
  $0.999$ &  & $15$ &$30$ & 5.55e-15  \\ 
  $0.9999$ &  & $15$ &$23$ & 4.09e-14 \\ 
  $0.99999$ &  & $25$ &$46$ & 3.45e-14 \\
  $1$ &  & $87$ &$93$ & 6.31e-14 \\ \hline
\end{tabular}%
\end{table}
\begin{table}[pbt]
\caption{Numerical results for Example \ref{R63}}
\label{table1.1}
\begin{center}
\begin{tabular}{l|rr}
\hline
 & $\alpha = 0.99$ & $\alpha = 1$ \\ \hline
Eigenvector & 0.043820721946272 & 0.000000000000076  \\
 & 0.002224192630620 & 0.000000000000000   \\
 & 0.009256490884022 & 0.000000000000000   \\ 
  & 0.819168263512464 & 0.999999999999696   \\
 & 0.031217440669761 & 0.000000000000076   \\ 
 & 0.094312890356862 & 0.000000000000152   \\ \hline
\end{tabular}%
\end{center}
\end{table}
 
For this example, we apply MNI with the initial vector $\mathbf{x}_{0}=\mathbf{e}/6$.
In Table \ref{table6}, \textquotedblleft Sign\textquotedblright denotes the number of times with $s_k=(\max \widehat{\mathbf{w}}_{k})( \min \widehat{\mathbf{w}}_{k})<0$, \textquotedblleft Iter\textquotedblright denotes the number of iterations to
achieve convergence, \textquotedblleft Err\textquotedblright denotes the
residual error when the iteration is terminated. 
As suggested by our analysis in Section \ref{sec5}, we have $\widehat{\mathbf{w}}_{k}>0$ or $ \widehat{\mathbf{w}}_{k}<0$ 
during the iteration for $\alpha\le 0.9$ in the table. The case $\alpha=0.99$ does not pose any serious challenge to MNI, with the default initial vector. 
The number of iterations for $\alpha=0.99$ is larger than that for $\alpha=0.9$ for example. This is because, as $\alpha\to 1^-$, 
some components of the positive eigenvector are close to $0$, and we have already seen in Example \ref{ex3} that MNI will require 
more iterations when computing a nonzero eigenvector with one or more zero components. We have displayed the eigenvectors 
computed by MNI for $\alpha=0.99$ and $\alpha=1$ in Table \ref{table1.1}. Notice that the eigenvector for $\alpha=0.99$ is exactly the same as 
reported in \cite[p. 1534]{GLY15}.

%

\section{Conclusion}
\label{sec7}
We have proposed a modified Newton iteration (MNI) for finding a nonnegative $Z$-eigenpair of a nonnegative tensor. 
We have proved local quadratic convergence of MNI to any positive eigenpair of a nonnegative tensor when the Jacobian (for the original Newton iteration)   is nonsingular  at the eigenpair.   
Numerical experiments show that MNI can also be used to compute a positive eigenpair at which the Jacobian is singular, or to compute a nonnegative eigenpair with some zero components in the eigenvector, although no convergence theory has been established in those situations.  
When the tensor has both positive eigenpairs and nonnegative eigenpairs with some zero components in the eigenvector, MNI seems to find a positive eigenpair more often. We have not yet found any examples for which MNI (with the default initial vector)
fails to find a nonnegative $Z$-eigenpair of a nonnegative tensor,  but MNI should be more useful when computing a positive eigenpair of an irreducible nonnegative tensor, particularly when the Jacobian at the eigenpair is nonsingular. 

\section*{Acknowledgment}

This work was started when C.-H. Guo visited ST Yau Center at Chiao-Da
in Taiwan in late 2015; he thanks the Center for
its hospitality.


\begin{thebibliography}{99}

\bibitem{BPl94} \textsc{A. Berman and R. J. Plemmons}, \emph{Nonnegative
Matrices in the Mathematical Sciences}, SIAM, Philadelphia, PA, 1994.

%
\bibitem{CPZ08} \textsc{K. C. Chang, K. Pearson, and T. Zhang}, \emph{%
Perron--Frobenius theorem for nonnegative tensors}, Comm. Math. Sci., 6
(2008), pp. 507--520.

\bibitem{CPZ13} \textsc{K. C. Chang, K. J. Pearson, and T. Zhang}, 
Some variational principles for $Z$-eigenvalues
of nonnegative tensors, Linear Algebra Appl.,  438 (2013), pp. 4166--4182.

\bibitem{CZ13} \textsc{K. C. Chang and T. Zhang, }\emph{On the uniqueness
and non-uniqueness of the positive Z-eigenvector for transition probability
tensors, }J. Math. Anal. Appl., 408 (2013), pp. 525--540.



\bibitem{FGH}
\textsc{ S. Friedland, S. Gaubert, and L. Han},
\emph{Perron--Frobenius theorem for nonnegative multilinear forms and extensions}, Linear Algebra Appl., 
438 (2013), pp. 738--749.

\bibitem{GLY15}  \textsc{D. F. Gleich, L.-H. Lim, and Y. Yu}, \emph{Multilinear PageRank}, SIAM J. Matrix Anal. Appl., 36 (2015), pp. 1507--1541.


\bibitem{HHQ11} \textsc{S. Hu, Z.-H. Huang, and L. Qi}, \emph{Finding the spectral radius of a nonnegative tensor}, http://arxiv.org/pdf/1111.2138v1 (2011).


\bibitem{K95} \textsc{C. T. Kelley}, \emph{Iterative Methods for Linear and
Nonlinear Equations}, SIAM, Philadelphia, PA, 1995.

\bibitem{KB09} \textsc{T. G. Kolda and B. W. Bader}, \emph{Tensor
decompositions and applications}, SIAM Rev., 51 (2009), pp. 455--500.

\bibitem{KM11}
{\sc T. G. Kolda and J. R. Mayo},
{\em  Shifted power method for computing tensor eigenpairs},
SIAM J. Matrix Anal. Appl., 32 (2011), pp. 1095--1124.

\bibitem{LN14} \textsc{W. Li and M. Ng, }\emph{On the limiting probability
distribution of a transition probability tensor}, Linear Multilinear
Algebra, 62 (2014), pp. 362--385.

\bibitem{LGL15} \textsc{C.-S. Liu, C.-H. Guo, and W.-W. Lin}, \emph{A
positivity preserving inverse iteration for finding the Perron pair of an
irreducible nonnegative third order tensor}, SIAM J. Matrix Anal. Appl., 37
(2016), pp. 911--932.

\bibitem{LGL17} \textsc{C.-S. Liu, C.-H. Guo, and W.-W. Lin}, \emph{Newton--Noda iteration for finding the Perron pair of a weakly irreducible nonnegative tensor}, Numer. Math.,  (2017), doi:10.1007/s00211-017-0869-7. 

\bibitem{NQZ09} \textsc{M. Ng, L. Qi, and G. Zhou}, \emph{Finding the
largest eigenvalue of a nonnegative tensor}, SIAM J. Matrix Anal. Appl., 31
(2009), pp. 1090--1099.

\bibitem{NQ15} \textsc{Q. Ni and L. Qi}, \emph{A quadratically convergent
algorithm for finding the largest eigenvalue of a nonnegative homogeneous
polynomial map}, J. Global Optim., 61 (2015), 627--641.


\bibitem{Q05} \textsc{ L. Qi}, \emph{Eigenvalues of a real supersymmetric tensor}, 
J. Symb. Comput., 40 (2005), pp. 1302--1324.


\bibitem{V00} \textsc{R. S. Varga}, \emph{Matrix Iterative Analysis},
Springer, 2000.

\bibitem{YY11} \textsc{Y. Yang and Q. Yang}, \emph{On some properties of
nonnegative weakly irreducible tensors}, 
http://arxiv.org/pdf/1111.0713v2 (2011). 

\end{thebibliography}
\end{document}